\newtheorem{theorem}{\sc Theorem}[section]
\newtheorem{thm}[theorem]{\sc Theorem}
\newtheorem{lemma}[theorem]{\sc Lemma}
\newtheorem{prop}[theorem]{\sc Proposition}
\newtheorem{corollary}[theorem]{\sc Corollary}
 \title[Self-similar groups]{On a Class of Self-similar Polycyclic Groups}
\author{A. C. Dantas}
\address{Universidade de Bras\'ilia, Departamento de Matem\'atica, Bras\'ilia - DF,  70910-900, Brazil}
\email{alexcdan@gmail.com}
\author{E. de Melo} 
\address{Universidade de Bras\'ilia, Departamento de Matem\'atica, Bras\'ilia - DF, 70910-900, Brazil}
\email{emerson@mat.unb.br}
\author{R. N. de Oliveira}
\address{ Instituto de Matem\'atica e Estat\'istica, Universidade Federal de Goi\'as, Goi\^ania-GO, 74690-900 Brazil }
\email{ricardo@ufg.br}
\author{S. N. Sidki } 
\address{Universidade de Bras\'ilia, Departamento de Matem\'atica, Bras\'ilia - DF, 70910-900, Brazil}
\email{ssidki@gmail.com}
\subjclass[2010]{20D05; 20D45.}
\keywords{self-similar groups}
\begin{document}
\maketitle
\begin{center}
    \footnotesize{\bf{With an appendix by Bettina Eick}}
\end{center}
\maketitle
\begin{abstract}
A group $G$ is self-similar if it admits a triple $(G,H,f)$ where $H$ is a subgroup of $G$ and $f: H \to G$ a simple homomorphism, that is, the only subgroup $K$ of $H$, normal in $G$ and $f$-invariant ($K^f \leq K$) is trivial. The group $G$ then has two chains of subgroups:
\[
G_0 = G,\ H_0 = H,\ G_k = (H_{k-1})^f,\ H_k = H \cap G_{k}\ \text{for } (k \geq 1).
\]

We define a family of self-similar polycyclic groups, denoted $SSP$, where each subgroup $G_k$ is self-similar with respect to the triple $(G_k , H_k, f)$ for all $k$. By definition, a group $G$ belongs to this $SSP$ family provided $f: H \rightarrow G$ is a monomorphism, $H_k$ and $G_{k+1}$ are normal subgroups of index $p$ in $G_k$ ($p$ a prime or infinite) and  $G_k=H_kG_{k+1}$. When $G$ is a finite $p$-group in the class $SSP$, we show that the above conditions follow simply from $[G:H] = p$ and $f$ is a simple monomorphism.

We show that if the Hirsch length of $G$ is $n$, then $G$ has a polycyclic generating set $\{a_1, \ldots, a_n\}$ which is self-similar under the action of $f: a_1 \rightarrow a_2 \rightarrow \ldots \rightarrow a_n$, and then $G$ is either a finite $p$-group or is torsion-free. Surprisingly, the arithmetic of $n$ modulo $3$ has a strong impact on the structure of $G$. This fact allows us to prove that $G$ is nilpotent metabelian whose center is free $p$-abelian ($p$ prime or infinite) of rank at least $n/3$.

We classify those groups $G$ where $H$ has nilpotency class at most $2$. Furthermore, when $p=2$, we prove that $G$ is a finite $2$-group of nilpotency class at most $2$, and classify all such groups.

\end{abstract}
\maketitle
\section{Introduction}

It has been customary to define self-similarity for a group $G$ as the existence of both a subgroup $H$ of $G$ having finite index $m$ in $G$, and of a homomorphism $f: H \to G$. Then the group $G$ is self-similar provided the only normal subgroup $K$ of $G$ which is $f$-invariant subgroup (that is, $K^f \leq K$) is trivial. In this case, the triple $(G,H,f)$ is called simple. Such a simple triple leads to a faithful representation of $G$ as a state-closed group of automorphisms of a regular one-rooted tree of degree $m$, having as basis the set of right cosets of $H$ in $G$. The restriction on the finiteness of $m$ is unnecessary. The Kaloujnine-Krasner wreath embedding theorem $G \to H \wr (G/H)$ 
extends recursively, using $f$, to a representation  of $G$ into the automorphism group of the one rooted regular tree of valency $m$. The simplicity of self-similarity ensures that the representation is faithful (see \cite{KK}). Strictly speaking, this is the notion of transitive self-similar group, where the action of the group on the first level of the tree is transitive. A generalization to intransitive actions was introduced recently in \cite{DSS}. Self-similarity for infinite index $m$ was pursued sporadically in \cite{S1} from 1984,  \cite{S2} from 2004, and  \cite{S3} from 2005.

The following result, which holds for a general $m$ is the starting point for the developments in this paper.

\begin{prop}[\cite{BeS}, Proposition 23]\label{proBeS}
Let $G$ be a group, $H$ a subgroup of $G$, and $(G,H,f)$ a simple triple. Denote $H^f$ by $G(1)$, $H \cap H^f$ by $H(1)$, the center of $G$ by $Z(G)$ and $f|_{H(1)}$ by $f_1$. Suppose that $G$ decomposes as $G = Z(G) H H^f$. Then, $(G(1), H(1), f_1)$ is a simple triple.    
\end{prop}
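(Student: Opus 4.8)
The plan is to verify the definition of \emph{simple triple} directly for $(G(1),H(1),f_1)$. First one checks that the set-up is coherent: $H(1)=H\cap H^f$ is a subgroup of $G(1)=H^f$, and since $H(1)\le H$ the restriction $f_1=f|_{H(1)}$ is defined, with $H(1)^{f_1}=(H\cap H^f)^f\le H^f=G(1)$; so $f_1\colon H(1)\to G(1)$ is a genuine homomorphism between subgroups of $G(1)$. It then remains to show that if $K\le H(1)$ satisfies $K\trianglelefteq G(1)$ and $K^{f_1}\le K$, then $K=1$.

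The idea is to manufacture, out of such a $K$, a subgroup of $H$ that is normal in $G$ and $f$-invariant, so that simplicity of the original triple $(G,H,f)$ forces it --- and hence $K$ --- to be trivial. The natural candidate is the normal closure $\bar K:=\langle K^{g}: g\in G\rangle$. The crucial point, and the step I expect to be the main obstacle, is showing that $\bar K\le H$. This is where the hypothesis $G=Z(G)HH^f$ enters: taking inverses of this set-decomposition gives $G=Z(G)H^fH$ as well, so an arbitrary $g\in G$ may be written $g=z\,u^{f}h$ with $z\in Z(G)$ and $u,h\in H$. Then $K^{g}=\bigl((K^{z})^{u^{f}}\bigr)^{h}=(K^{u^{f}})^{h}=K^{h}$, using that $z$ is central and that $K\trianglelefteq H^f$ absorbs conjugation by $u^{f}\in H^f$. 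Thus every $G$-conjugate of $K$ is already an $H$-conjugate of $K$; in particular $\bar K=\langle K^{h}: h\in H\rangle\le H$, while $\bar K\trianglelefteq G$ holds by construction of the normal closure.

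Finally one checks $f$-invariance of $\bar K$. One has $\bar K^{f}=\langle (K^{h})^{f}: h\in H\rangle$, and because $h$, the elements of $K$, and their conjugates $K^{h}$ all lie in $H$ --- the domain of the homomorphism $f$ --- the map $f$ may legitimately be distributed over these products, giving $(K^{h})^{f}=(K^{f})^{h^{f}}\le K^{h^{f}}$ since $K^{f}\le K$; and $K^{h^{f}}\le\bar K$ because $\bar K$ is normal in $G$ and $h^{f}\in G$. Hence $\bar K^{f}\le\bar K$. Now $\bar K$ is a subgroup of $H$, normal in $G$, and $f$-invariant, so simplicity of $(G,H,f)$ yields $\bar K=1$, whence $K\le\bar K=1$ and $(G(1),H(1),f_1)$ is simple. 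The subtlety to keep in view throughout is that $f$ is only assumed to be a homomorphism on $H$, so every manipulation of the form ``$f$ over a product'' must be checked to involve only elements of $H$; this is precisely why the earlier reduction of $G$-conjugation to $H$-conjugation was indispensable, and it is also the reason no injectivity of $f$ is needed.
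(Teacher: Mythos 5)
Your proof is correct. The paper itself only cites this proposition from \cite{BeS} without reproducing a proof, but your argument --- passing to the normal closure of $K$ under $G$, using the factorization $G = Z(G)HH^f$ (and its inverse form $Z(G)H^fH$) to reduce $G$-conjugates of $K$ to $H$-conjugates so that the closure lies in $H$, and checking $f$-invariance via $(k^h)^f=(k^f)^{h^f}$ --- is exactly the argument the paper gives for the analogous Lemma \ref{virt2}, extended to absorb the central factor $Z(G)$.
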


The result fails to be inductive, as we cannot guarantee a corresponding factorization of  $G(1)$. Our definition of the class $SSP$ of polycyclic groups, imposes such a factorization. Specifically, given a self-similar group $G$ with a simple triple $(G,H,f)$ and the chains of subgroups:
\[
G_0 = G,\ H_0 = H,\ G_k = (H_{k-1})^f,\ H_k = H \cap G_{k}\ \text{for } (k \geq 1).
\] we define $G$ as an $SSP$ group if:

\begin{enumerate}
    \item $f: H \to G$ a monomorphism;
    \item $(G_k, H_k, f)$ is a simple triple for any $k$;
    \item $H_k$ and $G_{k+1}$ are normal subgroups of $G_k$, and $G_k = H_kG_{k+1}$;
    \item $G_k/H_k$ is cyclic of order $p$, a prime number or infinite.
\end{enumerate}

The group $G$ will be said to be an $SSP$-lifying, or simply lifting, of $H$. We have directly from the definition that a finite $SSP$ group is a finite $p$-group admitting an injective virtual endomorphism of degree $p$. The converse also holds.

\begin{theorem}\label{p-group}
Let $G$ be a finite $p$-group with a simple triple $(G,H,f)$ where $f$ is injective and $[G:H]=p$. Then $G$ is an $SSP$ group.
\end{theorem}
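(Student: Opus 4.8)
The plan is to induct on the order of $G$, the inductive step passing from $G$ to the subgroup $G_1=H^f$ by means of Proposition~\ref{proBeS}. We may assume $H\neq 1$: otherwise $[G:H]=p$ forces $|G|=p$, and the defining chain of $G$ is $G\geq 1\geq 1\geq\cdots$, for which all four conditions are immediate.

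The one substantive point is the factorization $G=HH^f$. Since $G$ is a finite $p$-group and $[G:H]=p$, the subgroup $H$ is maximal, hence normal, in $G$; therefore either $H^f\leq H$ or $HH^f=G$. In the first case injectivity of $f$ and finiteness of $G$ give $|H^f|=|H|$, so $H^f=H$ and $f\in\operatorname{Aut}(H)$; then $H$ is a nontrivial normal $f$-invariant subgroup of $G$, contradicting simplicity of $(G,H,f)$. Hence $G=HH^f=Z(G)HH^f$, and Proposition~\ref{proBeS} applies: $(G_1,H_1,f_1)$ is a simple triple, with $G_1=H^f$, $H_1=H\cap H^f$ and $f_1=f|_{H_1}$.

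Next I would collect the numerical data. As $f$ is injective and $G$ is finite, $|G_1|=|H|=|G|/p<|G|$; from $G=HG_1$ we get $[G_1:H_1]=[G_1:H\cap G_1]=[G:H]=p$; and $f_1$ is injective, being a restriction of $f$. Since $G_1\leq G$ is a finite $p$-group, the induction hypothesis yields that $G_1$ is an $SSP$ group. I would then check the routine fact that the defining chains of $G_1$ (formed with $f_1$) are, up to a shift, those of $G$: writing $G_k',H_k'$ for the chains of $G_1$, an induction on $k$ using $G_{k+1}\leq G_1$ and the equality $f_1=f$ on $H_1\supseteq H_{k+1}$ shows $G_k'=G_{k+1}$ and $H_k'=H_{k+1}$ for all $k\geq 0$.

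Putting this together: for every $k\geq 1$, conditions (2), (3) and (4) for $G$ coincide with the corresponding conditions for $G_1$ at level $k-1$, which hold because $G_1$ is $SSP$. At level $k=0$ they are direct: $(G,H,f)$ is simple by hypothesis; $H\trianglelefteq G$ as a maximal subgroup, and since $G=HG_1$ we have $[G:G_1]=[H:H_1]=p$, so also $G_1\trianglelefteq G$; and $G/H$ is cyclic of order $p$. Condition (1) is the hypothesis on $f$. Hence $G$ is $SSP$. The only place where anything happens is the factorization $G=HH^f$, which is what makes Proposition~\ref{proBeS} applicable and powers the induction; the remaining steps are bookkeeping with the two chains. (One can equally run the argument as a single induction on the level $k$, proving at each stage that $(G_k,H_k,f|_{H_k})$ is a simple triple with $G_k$ a finite $p$-group, $[G_k:H_k]=p$ and $G_k=H_kG_{k+1}$, by repeating the factorization argument and Proposition~\ref{proBeS} at every step.)
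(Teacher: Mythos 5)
Your proof is correct and follows essentially the same route as the paper's: induction on $|G|$, with the key factorization $G=HH^f$ (forced by maximality of $H$ together with simplicity of the triple) enabling the descent to the simple triple $(H^f,\,H\cap H^f,\,f)$ of smaller order. The paper packages that descent as Lemma~\ref{virt2}, proved directly via the $f$-invariant normal closure $K^H$, rather than by citing Proposition~\ref{proBeS} as you do, but since the paper itself notes that the lemma is a straightforward consequence of that proposition, the two arguments coincide.
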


By the Auslander-Swan theorem, polycyclic groups of finite Hirsch length have faithful representations in $\mathrm{GL}(n, \mathbb{Z})$ for some $n$ (see \cite[Theorem 3.3.1]{LR}). It was shown in \cite{BS} that the affine group $\mathrm{Affine}(n, \mathbb{Z}) = \mathbb{Z}^n \rtimes \mathrm{GL}(n , \mathbb{Z})$ has a faithful representation on an $m$-ary tree (where $m$ is finite) which is both finite-state and self-similar. Being of finite-state is inherited by subgroups, but this is not true for self-similarity in general; though, for $\mathrm{Affine}(n, \mathbb{Z})$, we observe from its action on the tree that its restriction to any subgroup containing the module $\mathbb{Z}^n$ induces faithful self-similarity of the subgroup.

It is known from \cite{BeS} that finitely generated torsion free nilpotent groups of class at most 2 afford faithful transitive self-similar representations on $m$-trees, where $m$ is finite. Furthermore, Berlatto and dos Santos have presented in \cite{BeS2} a detailed proof that the $N_{3,4}$ group of Bludov-Gusev, which is 4-generated torsion-free nilpotent of class 3, is not self-similar, be it transitive or not, for any finite $m$.

There are very few general results where a self-similar group of a certain variety embeds constructively in a larger self-similar group of the same variety. One such general result is

\begin{prop}[\cite{LS}, Section 3]\label{prop3}
Suppose that $(G, H, f)$ is a simple triple where the index $[G:H]= m$ is finite or infinite. Consider $\widehat{H} =\underset{k-1}{ G   \times \cdots \times G}\times H,$ and define the homomorphisms $F: \widehat{H} \to G^k$, by
$F: (g_1, \ldots, g_{k-1},h) \mapsto ( h^f,g_1, \ldots, g_{k-1}).$ Then, the triple $(G^k,\widehat{H},F)$ is simple.
\end{prop}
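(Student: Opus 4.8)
\emph{Proof proposal.} By definition of a simple triple, proving that $(G^k,\widehat{H},F)$ is simple amounts to showing that the only subgroup $K$ of $\widehat{H}$ which is normal in $G^k$ and satisfies $K^F\le K$ is the trivial one. My plan is to push such a $K$ down to its $k$ coordinate projections and show that these projections are squeezed between a single normal subgroup of $G$ and its image under $f$, so that the simplicity of the original triple $(G,H,f)$ finishes the job.

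Fix a candidate $K$ and, for $1\le i\le k$, write $\pi_i\colon G^k\to G$ for the $i$-th coordinate projection and set $M_i=\pi_i(K)$. Since each $\pi_i$ is onto and $K$ is normal in $G^k$, every $M_i$ is normal in $G$; and since $K\le\widehat{H}=G\times\cdots\times G\times H$, the last of these satisfies $M_k\le H$. The one calculation that matters is how $F$ acts on coordinates: it shifts the first $k-1$ coordinates one step to the right and moves the $f$-image of the last coordinate into the first slot. I would use this as follows. Given $g\in M_i$ with $i\le k-1$, pick $x\in K$ with $\pi_i(x)=g$; then $F(x)\in K^F\le K$ and its $(i+1)$-st coordinate is again $g$, so $g\in M_{i+1}$. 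This yields the chain $M_1\le M_2\le\cdots\le M_k$. Likewise, given $h\in M_k$, pick $x\in K$ with $\pi_k(x)=h$; since $h\in H$ the element $h^f$ is defined, $F(x)\in K$, and the first coordinate of $F(x)$ is $h^f$, so $M_k^f\le M_1$.

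Putting the two pieces together gives $M_k^f\le M_1\le M_k$, so $M_k$ is a subgroup of $H$, normal in $G$, and $f$-invariant. By simplicity of $(G,H,f)$ we get $M_k=1$, and then $M_1=\cdots=M_{k-1}=1$ as well since each is contained in $M_k$. Hence $K$ has trivial image under every coordinate projection, so $K\le\bigcap_{i=1}^k\ker\pi_i=1$, which is what is needed.

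I do not expect a serious obstacle here. The only point requiring care is that the shift relation must be transferred from individual elements to the subgroups $M_i$ — it is the images, not the elements, that interleave — and this is precisely where one uses surjectivity of the $\pi_i$ together with $K^F\le K$. It is worth noting that the restriction $K\le\widehat{H}$, built into the definition of a simple triple, is doing real work: it is what forces the last coordinate of any element of $K$ to lie in $H$, so no separate argument is needed to check that the relevant coordinates can be fed into $f$. For $k=1$ the statement degenerates to the fact that $(G,H,f)$ is itself simple, and the general case is just a telescoping of that.
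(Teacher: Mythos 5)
Your argument is correct and complete: the projections $M_i=\pi_i(K)$ are normal in $G$ because $\pi_i$ is surjective and $K\trianglelefteq G^k$, the coordinate shift of $F$ gives $M_1\le\cdots\le M_k$ and $M_k^f\le M_1$, and then $M_k\le H$ is normal, $f$-invariant, hence trivial by simplicity of $(G,H,f)$, forcing $K\le\bigcap_i\ker\pi_i=1$. The paper itself states this proposition as a citation to Bartholdi--Sidki and reproduces no proof, so there is nothing to compare against; your write-up supplies a self-contained argument consistent with the definition of simple triple used here.
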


In the above proposition, if $f$ is a monomorphism, then $F$ is also a monomorphism. Consequently, this method provides a way to construct new SSP groups.

This paper is organized as follows: Section 2 provides a proof of Theorem \ref{p-group} and an exposition of preliminary properties of $SSP$-groups of Hirsch length $n$. Notable among these properties are the following:

\begin{theorem}
Let $G$ be an $SSP$-group of Hirsch length $n$.
\begin{enumerate}
\item[(i)] There exists a polycyclic generating subset $S = \{a_1,\dots,a_n\}$ of $G$ such that its elements have the same order $p$ (finite or infinite), and $S$ is self-similar under $f: a_i  \to  a_{i+1}$ for $1\leq i\leq n-1$; 
\item[(ii)] $G$ is a nilpotent group.
\end{enumerate}  
\end{theorem}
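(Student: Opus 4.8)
The plan is to extract both statements from the way $f$ shifts the two chains $(G_k)$ and $(H_k)$. Since $f$ is a monomorphism it restricts to isomorphisms $f\colon H_k\xrightarrow{\ \sim\ }G_{k+1}$ for all $k$; feeding this into $G_k=H_kG_{k+1}$ and $[G_k:H_k]=p$ gives $H_k\cap G_{k+1}=H_{k+1}$ and hence
\[
G_k/G_{k+1}\ \cong\ H_k/H_{k+1}\ \cong\ H_{k-1}/H_k\qquad(k\ge1),
\]
the last isomorphism being induced by $f^{-1}$. So every factor of the descending chain $G=G_0\supseteq G_1\supseteq\cdots$ is isomorphic to one fixed cyclic group $C$. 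If $C$ were infinite cyclic, the Hirsch length would strictly decrease at each step and eventually vanish, forcing some $G_m$ to be finite and contradicting $G_m/H_m\cong\mathbb{Z}$; hence $C\cong\mathbb{Z}/p$ with $p$ prime, $[G_k:G_{k+1}]=p$, and all $G_k$ have the same Hirsch length. Two cases result: either the chain reaches the trivial group after finitely many steps, so $|G_k|=p^{\,n-k}$ and $G$ is a finite $p$-group; or it is strictly decreasing forever, and then — using that every triple $(G_k,H_k,f)$ is simple — $G$ has no nontrivial finite normal subgroup and is torsion-free. This records the dichotomy, and it will be used for both parts.

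For part (i), in the finite case I would build $S$ from the bottom: let $a_n$ generate the last nontrivial term $G_{n-1}\cong\mathbb{Z}/p$. Since $a_n\in G_{n-1}\subseteq\operatorname{im}(f)=G_1$ and, inductively, $a_j\in H_{j-1}\subseteq G_1$ for $2\le j\le n-1$, one may successively set $a_{j-1}:=f^{-1}(a_j)$; this yields $a_1,\dots,a_n$ with $a_i\in H_{i-1}$ for $i<n$, with $a_i^f=a_{i+1}$, and with $\langle a_i,\dots,a_{n-1}\rangle=H_{i-1}$ (because $f$ carries this group onto $G_i=H_{i-1}^f$ and is injective), whence — as $H_{i-1}$ is maximal in $G_{i-1}$ and $a_n\notin H$ — also $\langle a_i,\dots,a_n\rangle=G_{i-1}$. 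Taking $i=1$ shows $G=\langle a_1,\dots,a_n\rangle$; the subgroups $\langle a_i,\dots,a_n\rangle=G_{i-1}$ form a composition series with factors of order $p$; and since $f$ is injective all $a_i$ have the same order, equal to $|a_n|=p$. In the torsion-free case the chain $(G_k)$ has no bottom, so instead I would pick $a_1$ in the domain of a sufficiently high iterate of $f$ and mapping onto a generator of $G/G_1\cong\mathbb{Z}/p$, and set $a_i=a_1^{\,f^{\,i-1}}$; pushing $f$ through $G_0=H_0G_1=\langle a_1\rangle G_1$ gives $G=\langle a_1,\dots,a_k\rangle G_k$ for all $k$. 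The delicate point — which I expect to be the main obstacle of (i) — is to choose $a_1$ so that its first $n$ iterates already exhaust $G$ (equivalently $G_n\le\langle a_1,\dots,a_n\rangle$, although now $G_n\ne1$); here polycyclicity of $G$ and the simplicity of \emph{all} the triples $(G_k,H_k,f)$ have to be used in full. Once $S$ generates $G$, injectivity of $f$ again forces its elements to have the same (now infinite) order.

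For part (ii), nilpotency is automatic when $G$ is a finite $p$-group, so assume $G$ torsion-free and take $S$ as above. The crucial observation is that $f$ shifts commutators, $[a_i,a_j]^f=[a_{i+1},a_{j+1}]$ whenever the entries lie in $H$; together with $a_i^{\,p}\in G_i$ and $G_1\trianglelefteq G$ this forces the lower central series of $G$ to descend into the chain $(G_k)$, so $\bigcap_k\gamma_k(G)\subseteq\bigcap_kG_k$ and $G$ is residually nilpotent. Residual nilpotence is not enough on its own (witness $\mathbb{Z}\rtimes_{-1}\mathbb{Z}$), so I would further argue that $G$ acts unipotently on the factors of a polycyclic series: because $[G:H]=p$ and $f$ is injective, $f$ behaves like a contracting virtual endomorphism, which constrains the eigenvalues of the conjugation action on those factors to be roots of unity, hence trivial over a torsion-free group; and a polycyclic group acting unipotently on all factors of a series is nilpotent. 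In both parts the genuine difficulty is the non-terminating chain in the torsion-free case: one cannot read $S$ off, nor conclude nilpotency, from a finite composition series, so the self-similar structure together with the simplicity of every $(G_k,H_k,f)$ must carry the argument.
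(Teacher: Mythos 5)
There is a genuine gap, and it originates in your opening dichotomy. You rule out the case where the factors $G_k/G_{k+1}$ are infinite cyclic by arguing that the Hirsch length would eventually vanish, ``forcing some $G_m$ to be finite and contradicting $G_m/H_m\cong\mathbb{Z}$.'' But the chain simply terminates at $G_n=1$ after $n$ steps --- exactly the outcome you accept as legitimate in the finite $p$-group case --- and the conditions on $G_k/H_k$ are only in force while $G_k$ is nontrivial (otherwise no finite $p$-group could be $SSP$ either, contradicting Theorem~\ref{p-group}). The $p=\infty$ case is precisely where the torsion-free $SSP$-groups live (e.g.\ the integral Heisenberg group in Section~4, with $G/H\cong\mathbb{Z}$), and structurally it is identical to the finite case: index-$\infty$ steps, Hirsch length dropping by one each time, chain ending at $1$. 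Instead you try to handle a phantom ``torsion-free'' case ($p$ prime, chain of finite-index subgroups descending forever) and there you explicitly leave both parts unproved: for (i) you concede that choosing $a_1$ so that its $n$ iterates exhaust $G$ is ``the main obstacle,'' and for (ii) your route through residual nilpotence plus a claim that $f$ ``constrains the eigenvalues of the conjugation action\ldots to be roots of unity'' is asserted, not argued. So the proposal proves the theorem only for finite $p$-groups.

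The paper needs none of this case analysis. For (i) it inducts on $n$: since $f$ is injective, $(G,H^f,f^{-1})$ is a simple triple and $G_1=H^f$ is an $SSP$-group with $n-1$ canonical generators; by induction $G_1$ has a basis $\{a_2,\dots,a_n\}$, and one sets $a_1:=(a_2)^{f^{-1}}$ (your bottom-up construction in the finite case is the same idea run in the other direction, and it too works verbatim when $p=\infty$ because the chain still bottoms out at $G_n=1$). Equality of the orders then follows from injectivity of $f$, as you note. For (ii) the paper's entire argument is Fitting's theorem: $H$ and $H^f$ are normal in $G$, each is an $SSP$-group of length $n-1$, hence nilpotent by induction, and $G=HH^f$ is a product of two normal nilpotent subgroups. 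This one-line argument is uniform in $p$ and replaces your unipotence sketch entirely; it is the key step your proposal is missing.
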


Considering the generator set $S$ from the preceding theorem, let $c$ be the maximum index $s$ such that $[a_1, a_2] = \dots= [a_1, a_s] =e$. Then $c$ will be called the {\it cut-off} point for the group $G$, and $G$ is then said to be of $G(n, c)$-type. Given a non-abelian group $G$ of $G(n, c)$-type, its subgroups $H$ and $H^f$ have the same cut-off point $c$, and both are $SSP$-groups of type $G(n-1,c)$.

In Section 3, we construct groups of $G(n, c)$-type  for $2\leq n \leq6$. In particular, we prove that $G(4, 2)$-type groups do not exist. Furthermore, we show that for $n \leq5$ the groups have nilpotency class at most 2, and there exist groups of $G(6, 4)$-type of nilpotency class 3. We comment that Hand-calculations of groups of $G(n, c)$-
type were done up to $n = 12$, and no groups with a nilpotency class greater than 3 were found. This leads to the problem about the existence of an upper bound for the nilpotency class of such groups in general and whether, indeed, the upper bound is 3.

In Section 4, we analyze the structure of groups of $G(n, c)$-type under the additional condition of being metabelian, and then progress to prove that all $SSP$-groups are indeed metabelian, having a triple factorization determined by the modulo 3 arithmetic of $n$.

\begin{theorem}
Let $G$ be a $G(n, c)$-group, $n\geq3$, and write $n = 3q(n) +r(n)$, where $q(n)$, $r(n)$ are non-negative integers and $r(n) \leq 2$.
Then,
\begin{enumerate}
    \item[(i)] $G$ is metabelian;
    \item[(ii)] Let  $n\geq3$. Then $c =c(n,j)= 2q(n) +r(n) +j$,  for some $0 \leq j \leq q(n)$;
    \item[(iii)] Let $G$ be a non-abelian.  Then any sequence of $SSP$-liftings of $G$ terminates after a finite number of steps. 
    \item[(iv)] $[a_1, a_{c(n,j)+1}]$ is a non-trivial element of $\langle a_{q(n)+1-j},\dots,a_{q(n)+1+r(n)+2j}\rangle$;    
    \item[(v)] $Z(G) = \langle a_{q(n)+1 -j} ,\dots, a_{c(n, j)}\rangle$ has rank at least  $ n/3$;
    \item[(vi)] $G$  has the triple decomposition $$G = \langle a_1,\dots,a_{q(n)-j}\rangle \cdot Z(G) \cdot \langle a_{c(n,j)+1},\dots,a_n\rangle,$$ where the three factors are free abelian $p$-groups.
\end{enumerate}
\end{theorem}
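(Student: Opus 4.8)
The plan is to argue by induction on the length $n$ of the self-similar generating sequence $S=\{a_1,\dots,a_n\}$ supplied by the preceding theorem, with the cases $n\le 6$ furnished by the explicit constructions of Section 3. The inductive leverage comes from the fact, recorded just before the statement, that for a non-abelian $G$ of type $G(n,c)$ both $H$ and $H^f$ are $SSP$-groups of type $G(n-1,c)$, carrying self-similar generating sequences obtained from $\{a_1,\dots,a_n\}$ by deleting $a_n$, respectively by shifting indices, so that the inductive hypothesis applies to both. First I would record the elementary consequences of self-similarity together with $[G:H]=p$: that $G'\le H\cap H^f$; that $[a_i,a_j]=e$ if and only if $|i-j|<c$, obtained from the identity $[a_i,a_j]^f=[a_{i+1},a_{j+1}]$ and the maximality of $c$; that $[a_i,a_{i+c}]=[a_1,a_{c+1}]^{f^{i-1}}\neq e$ whenever $i+c\le n$, using injectivity of $f$; and that $c\le n$ with equality exactly when $G$ is abelian, a degenerate case in which all six clauses are immediate. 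From now on $G$ is non-abelian, so $c\le n-1$.

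The core is a simultaneous treatment of (i) and of the sharpened form of (iv): writing $j:=c-2q(n)-r(n)$, the element $[a_1,a_{c+1}]$ is a non-trivial element of the band $\langle a_{q(n)+1-j},\dots,a_{q(n)+1+r(n)+2j}\rangle$, and necessarily $0\le j\le q(n)-1$. Following the plan announced for Section 4, I would first establish the band location, the description of the centre, and the triple factorization for the $G(n,c)$-groups that are \emph{a priori} metabelian, working inside the abelian groups $H'$ and $G'$ and transporting relations by $f$; the decisive point is that for non-abelian $G$ one automatically has $j\le q(n)-1$, and then any two generators appearing in the band -- indeed in the union $\langle a_{q(n)+1-j},\dots,a_n\rangle$ of all its $f$-shifts -- lie at distance at most $c-1$, hence commute. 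This places $G'$ inside an abelian subgroup, giving $G''=e$ for the metabelian representatives; the induction then disposes of the metabelianity hypothesis: $H$ is metabelian by the inductive hypothesis, $G'\le H$, and the structural description of $H$ confines $G''\le H'$ to a subgroup meeting all of its $G$-conjugates trivially, forcing $G''=e$, which is (i). To locate $[a_1,a_{c+1}]$ itself I would feed the inductive hypothesis into $H^f$ and into $H$: this positions $[a_2,a_{c+2}]$ inside the $(n-1)$-band, and, $f$ acting on the generators as the index shift and being injective, $f^{-1}$ carries this back to a position for $[a_1,a_{c+1}]$; intersecting with the constraint $[a_1,a_{c+1}]\in H$ and performing the bookkeeping of $n$ modulo $3$ -- exactly where the three residue classes behave differently -- pins the band down.

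The genuinely hard point is the lower bound on $c$, and I expect it to be the main obstacle. The inductive hypothesis only yields $c\ge c(n-1,0)$; this equals $c(n,0)$ when $n\equiv 0\pmod 3$, but is one unit short when $n\equiv 1$ or $2$, where one must separately exclude the boundary value $c=2q(n)+r(n)-1$, the general version of ``$G(4,2)$-type groups do not exist''. I would dispose of it by nilpotency: at that boundary value the localization just described pins $[a_1,a_{c+1}]$ to a non-trivial power of a single generator (or to a two-generator subgroup) sitting near the middle of the sequence, and then, iterating commutators of that generator against the few generators at distance $\ge c$ from it -- using the distance criterion for non-triviality and the $f$-translation identities -- one finds that this generator would lie in every term of the lower central series; since $G$ is nilpotent by the preliminary theorem, this forces $a_i^{x}=e$ for some $i$ and some $x$ not divisible by $p$, contradicting the fact that every $a_i$ has order exactly $p$. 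Hence $c\ge c(n,0)$, and together with $c\le n-1$ this gives $0\le j\le q(n)-1$ and $c=2q(n)+r(n)+j=c(n,j)$, which is (ii).

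The remaining clauses are short. For (iii): any $SSP$-lifting $\widetilde G$ of a non-abelian $G(n,c)$-group is again non-abelian, of type $G(n+1,c)$, so (ii) applied to $\widetilde G$ gives $c=c(n+1,j')\ge 2q(n+1)+r(n+1)=\tfrac{2(n+1)+r(n+1)}{3}\ge\tfrac{2(n+1)}{3}$, that is $n+1\le\tfrac{3c}{2}$; since $c$ is unchanged along a chain of liftings while $n$ strictly increases, the chain is finite. For (v): the inclusion $\langle a_{q(n)+1-j},\dots,a_{c}\rangle\subseteq Z(G)$ holds because each such $a_s$ is within distance $c-1$ of every generator and so commutes with all of them; the reverse inclusion uses (i), (iv) and the factorization of (vi) to exclude a non-trivial central component in the two outer blocks; and the rank equals $c-(q(n)+1-j)+1=q(n)+r(n)+2j\ge q(n)+r(n)=\tfrac{n+2r(n)}{3}\ge n/3$. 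For (vi): the displayed product is simply the polycyclic series $\langle a_1,\dots,a_i\rangle$ cut at $i=q(n)-j$ and $i=c$, so it equals $G$; the two outer factors are abelian by the same distance argument, the middle one is $Z(G)$ by (v), and all three are free abelian $p$-groups since the polycyclic sequence has every successive quotient of order exactly $p$ and the three orders multiply to $p^{n}$.
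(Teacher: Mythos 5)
Your overall skeleton---induction on $n$ with the Section~3 computations as base cases, the Hall--Witt relations $W(1,c+1,n+1)$ and its symmetric partner to kill the two endpoint exponents of the band, the mod~$3$ bookkeeping, and the short arguments for (iii), (v), (vi)---matches the paper. But there are genuine gaps at the two hardest points. First, metabelianity. You claim $G'$ lies in the abelian subgroup $\langle a_{q(n)+1-j},\dots,a_n\rangle$ because the $f$-shifts of the band containing $[a_1,a_{c+1}]$ all sit there. That accounts only for the commutators $[a_i,a_{i+c}]$; the commutators $[a_1,a_m]$ with $m\ge c+2$ are a priori only known to lie in $\langle a_2,\dots,a_{m-1}\rangle$ (the essential relations), and confining \emph{them} to the band is precisely the hard content of the paper's Sections 5--6, established there only under a class-$2$ hypothesis. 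Your fallback---``$G''\le H'$ meets all of its $G$-conjugates trivially, forcing $G''=e$''---does not parse: $G''$ is normal in $G$, so it coincides with all its $G$-conjugates, and the $f$-invariance of $G''$ that would let you invoke simplicity is not established. The paper avoids all of this with one observation you are missing: since $c\ge 2q(n)+r(n)$ forces $n\le 2c$, \emph{both} $\langle a_1,\dots,a_c\rangle$ and $\langle a_{c+1},\dots,a_n\rangle$ are abelian, the polycyclic normal form exhibits $\widehat G$ as the set product of two abelian subgroups, and It\^o's theorem then gives metabelianity of the lifting at once.

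Second, the lower bound $c\ge 2q(n)+r(n)$ (exclusion of the boundary value), which you yourself flag as the main obstacle. Your argument rests on the ``distance criterion for non-triviality,'' i.e.\ on $[a_i,a_j]\ne e$ whenever $|i-j|\ge c$; this is false for $|i-j|>c$ (maximality of $c$ only gives $[a_1,a_{c+1}]\ne e$; in the $G(6,4)$ groups, for instance, $[a_1,a_6]$ may be trivial), and the assertion that the offending middle generator ``lies in every term of the lower central series'' is not substantiated beyond what amounts to the $G(4,2)$ computation. The paper's route is different and self-contained: it inducts upward through liftings rather than downward through $H$, and if the lifted parameter $j'$ were negative then necessarily $j=0$ and $q(n+1)=q(n)$, so the band for $[a_1,a_{c+1}]$ has width $r(n)+1\le 2$ and consists exactly of the endpoints that the Hall--Witt reductions $u=v=0$ have just annihilated; hence $[a_1,a_{c+1}]=e$ in $\widehat G$, a contradiction. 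As written, item (i) and the lower half of item (ii) are not proved; the remaining clauses, which you correctly observe are short consequences, do go through essentially as in the paper.
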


In Section 5  we provide faithful matrix  presentations for class 2 nilpotent groups of $G(n,c)$-type. 

In Section 6 we prove

\begin{theorem}
Let $G$ be a group of $G(n+1, c)$-type, and suppose its subgroup $H$ has nilpotency class 2. Then, $G$ is nilpotent of class at most 3.    
\end{theorem}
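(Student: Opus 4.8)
The plan rests on the observation that $H$ and its $f$-image between them carry essentially all of the commutator structure, and that $f$ contracts $\gamma_2$.

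\emph{Set-up and reduction.} Fix a self-similar polycyclic generating set $a_1,\dots ,a_{n+1}$ of $G$ with $f\colon a_i\mapsto a_{i+1}$. Then $H=\langle a_1,\dots ,a_n\rangle$ and $H^f=G_1=\langle a_2,\dots ,a_{n+1}\rangle$ are normal of index $p$ in $G$, $f$ carries $H$ isomorphically onto $H^f$ (so $H^f$ is again nilpotent of class $2$), $G=HH^f$, and $H\cap H^f=\langle a_2,\dots ,a_n\rangle$. Since $G/H$ and $G/H^f$ are cyclic, $\gamma_2(G)\le H\cap H^f$. By the structure theorem $G$ is metabelian, so $\gamma_2(G)$ is abelian and the metabelian commutator identities $[x,u,v]=[x,v,u]$ and $[x,u,v][u,v,x][v,x,u]=e$ (for $x\in\gamma_2(G)$) are available. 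It suffices to prove $\gamma_4(G)=e$.

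\emph{Contraction and the shape of $\gamma_3$, $\gamma_4$.} For $x\in H^f=\mathrm{im}\,f$ and $2\le j\le n+1$ one has $a_j=f(a_{j-1})$ with $a_{j-1}\in H$, whence $[x,a_j]=f\bigl([f^{-1}x,a_{j-1}]\bigr)\in\gamma_2(H^f)$. Applying this with $x\in\gamma_2(G)$ gives $[\gamma_2(G),a_j]\le\gamma_2(H^f)$ for $2\le j\le n+1$; and trivially $[\gamma_2(G),a_j]\le\gamma_2(H)$ for $j\le n$, since then both arguments lie in $H$. Intersecting, for $2\le j\le n$ we obtain $[\gamma_2(G),a_j]\le\gamma_2(H)\cap\gamma_2(H^f)\le Z(H)\cap Z(H^f)\le Z(G)$, the last inclusion because $G=HH^f$. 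As $\gamma_2(H),\gamma_2(H^f),Z(G)$ are normal in $G$, this yields $\gamma_3(G)\le\gamma_2(H)\,\gamma_2(H^f)\,Z(G)$. Now $\gamma_2(H)\le Z(H)$ is centralized by $a_1,\dots ,a_n$ and $\gamma_2(H^f)=f(\gamma_2(H))$ is centralized by $a_2,\dots ,a_{n+1}$, while $\gamma_3(H)=\gamma_3(H^f)=e$; feeding this into $\gamma_4(G)=[\gamma_3(G),G]$ leaves
\[
\gamma_4(G)\ \le\ [\gamma_2(H),a_{n+1}]\cdot[\gamma_2(H^f),a_1].
\]
So it remains to show that $a_{n+1}$ centralizes $\gamma_2(H)$ and that $a_1$ centralizes $\gamma_2(H^f)$.

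\emph{The crux.} Since $[a_s,a_t]=f([a_{s-1},a_{t-1}])$ for $s\ge2$ and $a_{n+1}=f(a_n)$, the contraction identity together with $\gamma_3(H)=e$ annihilates $[[a_s,a_t],a_{n+1}]$ for all $s\ge2$, and $[a_1,a_t]=e$ for $t\le c$; dually for $a_1$ and $\gamma_2(H^f)$. Thus $\gamma_4(G)=e$ boils down to showing $[[a_1,a_t],a_{n+1}]=e$ for $c<t\le n$ and its $f$-image. For this I would invoke the structure theorem: $Z(H)=\langle a_{q-j},\dots ,a_c\rangle$ lies exactly one generator below $Z(G)=\langle a_{q+1-j},\dots ,a_c\rangle$ (with $q=q(n+1)$), the identity $[a_{n+1},a_m]=f([a_n,a_{m-1}])$ shows that $a_{n+1}$ centralizes all of $Z(G)$, and a closer analysis of the commutators $[a_1,a_t]$ — using the cut-off relations, part (iv), the fact that $\gamma_2(H)\le Z(H)$, and the recursive $SSP$ structure — shows that $a_{n+1}$ in fact already centralizes $\gamma_2(H)$; applying $f$ gives the dual statement, and hence $\gamma_4(G)=e$ and $G$ has class at most $3$. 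The genuine obstacle is precisely this: every iterated commutator not involving both extreme generators $a_1,a_{n+1}$ collapses for soft reasons, but $[[a_1,a_t],a_{n+1}]$ with $c<t\le n$ (and its dual) are exactly the commutators that could raise the class to $4$, and ruling them out needs the fine structural information of Section~4 — equivalently, the classification of the class-$\le 2$ case; the purely formal argument above yields only class $\le 4$.
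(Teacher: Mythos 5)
Your reduction is essentially correct and is packaged differently from the paper's: from $\gamma_2(G)\le H\cap H^f$, the contraction identity $[x,a_j]=\bigl([x^{f^{-1}},a_{j-1}]\bigr)^f$, and the class-$2$ hypothesis on $H$ and $H^f$, you correctly obtain $\gamma_3(G)\le\gamma_2(H)\,\gamma_2(H^f)\,Z(G)$ and thereby reduce the theorem to the single claim that $\gamma_2(H)$ and $\gamma_2(H^f)$ are central in $G$. (The paper instead works from the explicit presentation and computes $\gamma_3(G)=\langle[a_1,a_{n+1},a_1],[a_1,a_{n+1},a_{n+1}]\rangle$ directly, checking that these two commutators lie in $Z(G)$.) But the claim you reduce to is precisely where all of the work lies, and you do not prove it; as you yourself concede, your argument stops at the reduction.

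Concretely, the claim is not a soft consequence of the structure theory. One has $\gamma_2(H)\le Z(H)=\langle a_{q(n)+1-j}\rangle\times Z(G)$, and $a_{n+1}$ does \emph{not} centralize $Z(H)$: indeed $[a_{q(n)+1-j},a_{n+1}]=[a_1,a_{c+1}]_{q(n)-j}\neq e$. So everything hinges on showing that the generators $[a_1,a_{c+1+k}]$ of $\gamma_2(H)$ have trivial component on the extreme coordinates $a_{q(n)+1-j}$ and $a_{q(n)+1+r(n)+2j+k}$, i.e.\ that the exponents $x_k,y_k$ allowed by Theorem~\ref{nilclass2} are forced to vanish once the lifting exists. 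That is exactly the content of Theorem~\ref{SSPHnil}, and the paper proves it by induction on $k$ using the Hall--Witt relations $W(1,c+1+k,n+1)$ and their symmetric partners together with the proposition on the equation $w^{u+vf^t}=e$ (needed because each Hall--Witt relation couples an exponent $x_k$ of $[a_1,a_{c+1+k}]$ with an exponent $z_{k+1}$ of $[a_1,a_{n+1}]$ rather than killing either outright). None of this appears in your proposal. A secondary point: even your fallback assertion of class at most $4$ is not justified, since without the crux the inclusions $[\gamma_2(H),a_{n+1}]\le\gamma_2(H^f)$ and $[\gamma_2(H^f),a_1]\le\gamma_2(H)$ merely feed into each other, and the formal argument alone does not terminate the lower central series at any predetermined stage.
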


As a consequence, we obtain

\begin{corollary}
Let $G$ be an SSP-group where $p = 2$. Then $G$ is a finite 2-group of nilpotency class at most 2.    
\end{corollary}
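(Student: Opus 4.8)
The plan is to prove the two assertions --- that $G$ is finite, and that it has nilpotency class at most $2$ --- separately, the first at once and the second by induction on the Hirsch length $n$ of $G$.

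For finiteness, recall from the structure theorem above that $G$ has a self-similar polycyclic generating set $S=\{a_1,\dots,a_n\}$ all of whose elements have the common order $p$; since $p=2$ they are involutions, so $G$ is a finitely generated torsion polycyclic group and hence finite. Being a finite $SSP$-group it is then a finite $2$-group carrying an injective virtual endomorphism of degree $2$; thus $G$ is a $G(n,c)$-group for some cut-off point $c$, and by the structure theorem for $G(n,c)$-groups it is nilpotent and metabelian. We may assume $G$ is non-abelian, as otherwise there is nothing to prove.

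Now induct on $n$. By Section~3, every $G(n,c)$-group with $n\le 5$ has class at most $2$, which settles the base of the induction, so assume $n\ge 6$. Then $H$ and $H^f=G_1$ are again $SSP$-groups, of type $G(n-1,c)$ and with $p=2$, so by the inductive hypothesis both have class at most $2$. If $H$ is abelian, then so is $G_1=H^f$, and from the factorization $G=HG_1$ with $H$ and $G_1$ normal abelian of index $2$ one gets $G'=[H,G_1]\le H\cap G_1=H_1$, while $[H_1,G]=[H_1,H]\,[H_1,G_1]=e$ shows $H_1\le Z(G)$; hence $G'\le Z(G)$ and $G$ has class at most $2$. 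If instead $H$ has class exactly $2$, the theorem preceding this corollary, applied with $n-1$ in place of its $n$, gives that $G$ has class at most $3$; as $p=2$, it then remains to prove in this case that $\gamma_3(G)=e$.

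The vanishing of $\gamma_3(G)$ is the crux, and for it I would combine three things. First, each $a_i$ being an involution forces $[a_i,a_j]^{a_i}=[a_i,a_j]^{-1}=[a_i,a_j]^{a_j}$, so that $[a_i,a_j,a_i]=[a_i,a_j]^{-2}$ and, $\gamma_3(G)$ being central in a class-$3$ group, every $[a_i,a_j,a_k]$ is an involution. Second, the Hall--Witt identity collapses in a class-$3$ group to $[a_i,a_j,a_k]\,[a_j,a_k,a_i]\,[a_k,a_i,a_j]=e$. Third, parts (iv)--(vi) of the structure theorem locate the commutators precisely: $[a_1,a_{c+1}]$ is a non-trivial element of $\langle a_{q(n)+1-j},\dots,a_{q(n)+1+r(n)+2j}\rangle$, which --- since $G$ is non-abelian, so that $c=c(n,j)=2q(n)+r(n)+j$ with $j\le q(n)-1$ --- lies inside $Z(G)=\langle a_{q(n)+1-j},\dots,a_{c}\rangle$, while $G=\langle a_1,\dots,a_{q(n)-j}\rangle\cdot Z(G)\cdot\langle a_{c+1},\dots,a_n\rangle$ and the self-similar action produces from $[a_1,a_{c+1}]$ its translates $[a_{1+k},a_{c+1+k}]$. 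Feeding the relations of the first two items through this triple decomposition and down the chain of sections $G_k=G(n-k,c)$ --- using that each $G_k$ with $k\ge 1$ already has class at most $2$ by induction, and invoking the termination of the chain of $SSP$-liftings (part (iii)) where needed --- I expect to be driven to $\gamma_3(G)=e$. The step I anticipate as the main obstacle is the bookkeeping that locates the higher commutators $[a_1,a_{c+2}],\dots,[a_1,a_n]$, which part (iv) does not give directly and which must be reduced, through the metabelian commutator identities, to the already-controlled $[a_1,a_{c+1}]$: it is exactly these that one must place in $Z(G)$ in order to conclude $G'\le Z(G)$. As a consistency check, the argument forces that no $G(6,4)$-group has $p=2$, matching the class-$3$ examples of that type from Section~3, which must then have $p$ odd or infinite.
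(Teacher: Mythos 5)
Your reduction to the case where $H$ has class exactly $2$ is sound: the finiteness observation, the base cases $n\le 5$, the separate (and correct) treatment of abelian $H$, and the appeal to the class-at-most-$3$ theorem are all fine. But the crux of the statement --- showing $\gamma_3(G)=e$ when $H$ has class $2$ --- is not actually proved: you explicitly defer it (``I expect to be driven to $\gamma_3(G)=e$'') and name as an unresolved obstacle the location of the commutators $[a_1,a_{c+2}],\dots,[a_1,a_{n+1}]$. That bookkeeping is precisely the content of Theorem \ref{SSPHnil}, which you should invoke in full rather than only through its class-$\le 3$ corollary. It supplies the two facts that finish the argument immediately: (a) $\gamma_3(\widehat G)=\langle [a_1,a_{n+1},a_1],\,[a_1,a_{n+1},a_{n+1}]\rangle$, so only the single commutator $[a_1,a_{n+1}]$ needs to be controlled; and (b) its relation $(iv)$ places $[a_1,a_{n+1}]$ inside $\langle a_{q(n)+1-j},\dots,a_{c(n,j)+1}\rangle$, which for $j\le q(n)-1$ is a commuting set of involutions when $p=2$, so $[a_1,a_{n+1}]^2=e$. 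Your own identity $[x,y,x]=[x,y]^{-2}$ (equivalently: $\langle a_1,a_{n+1}\rangle$ is dihedral of order at most $8$) then annihilates both generators of $\gamma_3(\widehat G)$ --- and this is exactly the paper's proof. Without (a) you have no finite list of triple commutators to kill, and without (b) you cannot even assert that $[a_1,a_{n+1}]$ is an involution, since $\{a_2,\dots,a_n\}$ is not a commutative set in general; these are the two missing ingredients, not mere bookkeeping.

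A secondary point: your closing consistency check asserts that no $G(6,4)$-group has $p=2$. This contradicts Section 3, item (4)(b), where groups of $G(6,4)$-type with $o(a_i)=2$ are exhibited and shown to have $\gamma_3(G)=e$. What the argument actually forces is that the class-$3$ specimens of type $G(6,4)$ require $p$ odd or infinite, not that the type is empty for $p=2$.
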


\section{Preliminary Results on $SSP$-groups}

In this section, we prove Theorem \ref{p-group} and list some results on $SSP$-groups that will be used throughout the text without explicit reference.

The following lemma is a straightforward consequence of Proposition \ref{proBeS}; we include a proof for convenience.

\begin{lemma}\label{virt2}
Let $G$ be a finite $p$-group with a simple triple $(G,H,f)$ where $[G:H]=p$. Then $f:H\cap H^{f} \rightarrow H^{f}$ is also a simple virtual endomorphism of degree $p$.
\end{lemma}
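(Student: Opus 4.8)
The plan is to deduce this lemma from Proposition \ref{proBeS} by checking that the factorization hypothesis $G = Z(G)\,H\,H^f$ is automatic when $G$ is a finite $p$-group and $[G:H] = p$. First I would observe that since $[G:H] = p$ is the smallest prime dividing $|G|$, the subgroup $H$ is normal in $G$; in particular $H^f = G(1)$ is contained in $H$, so $H(1) = H \cap H^f = H^f$ and the claimed endomorphism $f : H \cap H^f \to H^f$ is really just $f : H^f \to H^f$, which makes sense because $(H^f)^f = (H(1))^{f_1} \le G(1) = H^f$. The degree-$p$ claim then amounts to $[H^f : (H^f)^f] = p$, which follows from $f$ being injective of degree $p$ on $H$ restricted to the index-$p$ subgroup $H^f$ of $H$ (here I use that $[H : H\cap H^f] = [H:H^f]$ and that $f$ carries $H$ isomorphically onto $H^f$, so it carries the index-$p$ subgroup $H^f \cap H = H^f$ onto an index-$p$ subgroup of $H^f$).

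The remaining point, and the one place where a genuine argument is needed, is the factorization $G = Z(G)\,H\,H^f$. Since $H$ is already normal of index $p$, it suffices to produce a single element of $Z(G)\,H^f$ lying outside $H$ — equivalently, to show $Z(G)\,H^f \not\le H$. I would argue by contradiction: suppose $Z(G) \le H$ and $H^f \le H$ (the latter we already know), and moreover that $Z(G)\,H^f \le H$. Now consider the subgroup $N = \bigcap_{k \ge 0} G_k$ where $G_k = (H_{k-1})^f$ as in the statement; alternatively, I would look directly at a maximal $f$-invariant subgroup. Because $G$ is a finite $p$-group, $Z(G)$ is nontrivial, and one shows that the $f$-orbit closure of $Z(G)$ inside $G$ — that is, the subgroup generated by $Z(G), Z(G)^f, Z(G)^{f^2}, \dots$ — is an $f$-invariant subgroup; the obstruction is to check it is also normal in $G$. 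This is where finiteness and the $p$-group hypothesis do the work: iterating $f$ on the center and using that each $H_k$ has index a power of $p$, one finds a nontrivial subgroup that is simultaneously normal in $G$ and $f$-invariant, contradicting simplicity of $(G,H,f)$ — unless the factorization already holds.

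Concretely, the cleanest route I would take is: let $z$ be a nontrivial central element of $G$ of order $p$; if $\langle z \rangle^f \le H$ for all iterates, then $\langle z, z^f, z^{f^2}, \dots\rangle$ is $f$-invariant and, being generated by central elements and their images, one checks it is normalized by $G$ (using that conjugation in a $p$-group and the homomorphism property of $f$ interact controllably), giving a nontrivial simple-triple obstruction. Hence some iterate $z^{f^j}$ lies outside $H$, and then $z^{f^j} \in Z(G)\,H^f$-type position yields $G = \langle H, z^{f^j}\rangle = Z(G)HH^f$ after tracking indices. The main obstacle is precisely verifying the normality of the constructed $f$-invariant subgroup in $G$; I expect this to follow from a short commutator computation showing $[G, Z(G)^{f^k}] \le Z(G)^{f^{k+1}} \cdots$ or simply from the fact that $f$ is defined on all of $H \trianglelefteq G$ so $f$-images of normal subgroups of $G$ contained in $H$ behave well. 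Once the factorization is in hand, Proposition \ref{proBeS} applies verbatim to give that $(G(1), H(1), f_1) = (H^f, H\cap H^f, f|_{H\cap H^f})$ is a simple triple, and the index count above upgrades this to "simple virtual endomorphism of degree $p$."
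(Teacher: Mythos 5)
There is a genuine gap, and it occurs at the very first step. You claim that because $H$ is normal of index $p$, the image $H^f = G(1)$ is contained in $H$, so that $H\cap H^f = H^f$. This is false, and in fact the opposite is forced by the hypotheses: if $H^f \le H$, then $H$ itself would be a nontrivial subgroup of $H$, normal in $G$ and $f$-invariant, contradicting the simplicity of $(G,H,f)$. Normality of $H$ in $G$ says nothing about invariance under $f$; these are different conditions. The correct conclusion is that $H^f \not\le H$, whence (since $H$ is maximal and normal) $G = HH^f$, and then $[H^f : H\cap H^f] = [HH^f : H] = [G:H] = p$, which is exactly the degree count you need. Starting from $H^f\le H$ your index computation would not produce degree $p$, and everything downstream of that identification is built on a false premise.

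The second half of your proposal is also problematic, though in a different way. Once one knows $G = HH^f$, the factorization $G = Z(G)HH^f$ needed for Proposition \ref{proBeS} is automatic, so the entire construction involving the $f$-orbit closure of $Z(G)$ is unnecessary; and as written it is not a proof anyway, since the normality in $G$ of $\langle Z(G), Z(G)^f, \dots\rangle$ --- which you yourself identify as ``the main obstacle'' --- is never established, and $f$-images of central elements need not be central. For comparison, the paper argues directly rather than through Proposition \ref{proBeS}: given $K \le H\cap H^f$ normal in $H^f$ with $K^f\le K$, it forms the normal closure $K^H$, which lies in $H$ and is normal in $G=HH^f$, and checks $f$-invariance via $(k^h)^f = (k^f)^{h^f} \in K^{H^f} = K$; simplicity of $(G,H,f)$ then gives $K=1$. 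Your plan of invoking Proposition \ref{proBeS} could be made to work, but only after correcting the containment error and replacing the $Z(G)$ argument by the one-line observation that $G=HH^f$ already.
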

\begin{proof}
Since $[G:H]=p$ we have that $H \vartriangleleft G$ and $G = HH^{f}$. Moreover, since $|\frac{H^{f}}{H \cap H^{f}}|=|\frac{HH^{f}}{H}|$ we have that $H\cap H^f$ has index $p$ in $H^f$.  Let $K$ be a subgroup of $H\cap H^{f}$, normal in $H^{f}$, with $K^{f}\leq K$. Note that $K^{H}$ is a normal subgroup of $G$. Let $k^h\in K^H$ where $k\in K$ and $h\in H$. Applying $f$ on $k^{h}$ we obtain that $(k^h)^f=(k^f)^{h^f}\in K^{H^f}=K$. Then $(K^H)^f\leq K $. Since $f$ is a simple endomorphism we conclude that $K=1$, as desired.
\end{proof}

{\bf Proof of Theorem \ref{p-group}}:
\begin{proof}
Since $f$ is injective and $[G:H]=p$ we have that $H$ and $H^f$ are both maximal subgroups of $G$ and then normal in $G$ with $G=HH^f$. Now, as $|H^f|< |G|$ we use induction on the order of $G$ and apply Lemma \ref{virt2} to deduce that $H^f$, and consequently $G$, are $SSP$ groups.     
\end{proof}

\begin{thm}
    Let $G$ be an $SSP$-group of Hirsch length $n$. Then, the following results hold.
    \begin{enumerate}
        \item The trivial group is the only $f$-invariant subgroup of $G$.
        \begin{proof}
        This is because $f$ is injective and $G_{i+1}=(H_i)^f$ is not contained in $H$ for any $1\leq i\leq n-1$.
        \end{proof}
        \item There exists a generating subset $S = \{a_1,\ldots, a_n\}$ of $G$ such that $a_{i+1}=(a_i)^f$ for $1\leq i \leq n-1$. The set $S$ is called the canonical basis for $G$. Furthermore, $S_i = \{a_{i+1},..., a_{n}\}$ is a generating set for $G_i$ and $\{a_{i+1},..., a_{n-1}\}$ is a generating set for $H_i$. Strictly speaking, it is the set of cyclic subgroups $\{\langle a_1\rangle,\ldots, \langle a_n\rangle \} $ that is canonical.
        \begin{proof}
        Note that since $f$ is injective $(G,H^f,f^{-1})$ is a simple triple. Using induction on $n$ we have that $G_1=H^f$ has a basis $\{a_2,\ldots,a_n \}$ with the desired properties. Now, we define $a_1=(a_2)^{f^{-1}}$.   
        \end{proof}
        \item For $n\geq 3$, $[a_i, a_{i+1}]$  = 1 for all $i$. If $[a_1, a_2] =\cdots=[a_1, a_s] =1$, then $\{a_1,\ldots, a_s\}$ is a commutative set.
        \begin{proof}
        $G_{n-2}=\langle a_{n-1},a_{n}\rangle$ is abelian and then $[a_{n-1},a_{n}]=1$. Now, since $[a_i, a_{i+1}]^{f^{n-(i+1)}}=[a_{n-1},a_{n}]$ the result follows.
        \end{proof}
       \item Given a word $W(a_{i_1},\ldots,a_{i_s})$ in $G$ then $W(a_{1},\ldots,a_{l})^f=W(a_{2},\ldots,a_{l+1})$, which is defined provided $l\leq n-1$.  Whenever necessary, we denote $[a_1,a_i]^{f^t}$ by $[a_1,a_i]_ t$.
    \begin{proof}
    This is because $f$ is injective.
    \end{proof}
     \item Groups of SSP-type are nilpotent.
        \begin{proof}
        Let $G$ be of $G(n, c)$-type. Proceed by induction on $n$. The assertion is clearly true for $n=2$. Since $H$, $H^f$ are normal subgroups of $G$, and of $G(n-1, c)$ type, by induction, they are both nilpotent. The result follows since $G = H  H^f$.
    \end{proof}
    \end{enumerate}
\end{thm}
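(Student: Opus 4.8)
\medskip\noindent\textit{Proof plan.}
The backbone is induction on the length $n$, together with two mechanisms that recur throughout: the defining relations $G_{k+1}=(H_k)^{f}$ and $H_{k+1}=H_k\cap G_{k+1}$ can be \emph{iterated} down the series $G=G_0\supseteq G_1\supseteq\cdots$, and $f$ is \emph{injective}, which lets one reverse the recursion and cancel $f$ from equalities. Everything rests on assertion (2) — the canonical basis — so I would establish that first and read (1), (3), (4), (5) off it.

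For (2): since $f$ is a monomorphism, $f^{-1}\colon H^{f}\to H\hookrightarrow G$ is a homomorphism, and the key claim is that $(G,H^{f},f^{-1})$ is again a simple triple. Indeed, if $L\le H^{f}=G_1$ satisfies $L^{f^{-1}}\le L$, then $L^{f^{-1}}\le H\cap G_1=H_1$, so $L=(L^{f^{-1}})^{f}\le (H_1)^{f}=G_2$; iterating, $L$ lies in every $G_k$, hence $L$ is trivial because the canonical series terminates at the trivial group. Thus $G_1=H^{f}$ is an $SSP$-group of length $n-1$ with simple triple $(G_1,H_1,f)$; by induction it has a canonical basis $\{a_2,\dots,a_n\}$, and I put $a_1:=(a_2)^{f^{-1}}\in H$. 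Then $a_{i+1}=(a_i)^{f}$ for $i\le n-1$ by construction; the generating-set statements for $G_i$ and $H_i$ with $i\ge1$ are the inductive hypothesis for $G_1$, while the cases $i=0$ follow from the factorisations $G=H_0G_1$ and $H_0=\langle a_1\rangle H_1$ provided by the $SSP$ axioms.

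The remaining items are short once the basis is in hand. For (1), an $f$-invariant $K\le G$ lies in $H$, and $K^{f^{j}}\le H_j$ for all $j$ by the same iteration; since $H_{n-1}=1$ (it is generated by the empty tail of the basis), $K^{f^{n-1}}=1$, so $K=1$ by injectivity of $f$. This is precisely where the finite-length polycyclic hypothesis is used, since simplicity of $(G,H,f)$ on its own only controls \emph{normal} $f$-invariant subgroups. Assertion (4) is immediate: $a_1,\dots,a_{n-1}$ lie in the domain $H$ of the homomorphism $f$ and $(a_i)^{f}=a_{i+1}$, so $W(a_1,\dots,a_l)^{f}=W(a_2,\dots,a_{l+1})$ for $l\le n-1$. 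For (3), the base case is the length-$2$ group $G_{n-2}=\langle a_{n-1},a_n\rangle$, in which $\langle a_{n-1}\rangle=H_{n-2}$ and $\langle a_n\rangle=G_{n-1}$ are both normal with product $G_{n-2}$, and the chain $G_{n-2}\supsetneq G_{n-1}\supsetneq G_n=1$ has factors of order $p$; if $p$ is a prime this forces $|G_{n-2}|\le p^{2}$, hence $G_{n-2}$ abelian, and if $p$ is infinite it forces $G_{n-2}\cong\Z^{2}$, so in either case $[a_{n-1},a_n]=1$. (One genuinely uses prime order here, since a product of two normal cyclic subgroups need not be abelian — witness $Q_8$.) Applying $f^{\,n-1-i}$ and cancelling gives $[a_i,a_{i+1}]=1$ for all $i$; and if $[a_1,a_2]=\dots=[a_1,a_s]=1$, applying powers of $f$ to these relations and cancelling yields $[a_i,a_j]=1$ for all $1\le i<j\le s$.

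Finally, (5) is an easy induction: for $n\le2$ the group is abelian, and for $n\ge3$ the normal subgroups $H=H_0$ and $H^{f}=G_1$ are $SSP$-groups of length $n-1$, hence nilpotent by induction, so $G=HH^{f}$ is a product of two normal nilpotent subgroups and is nilpotent by Fitting's theorem, valid in the polycyclic class. I expect the real obstacle to be part (2) — proving that $(G,H^{f},f^{-1})$ is a simple triple of the right length, that the canonical series indeed terminates, and that the basis of $G_1$ genuinely extends to one of $G$ realising the asserted generating sets — with the non-normality subtlety in (1) and the prime-order input in the base case of (3) being the other points that need care.
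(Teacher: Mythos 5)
Your proposal is correct and follows essentially the same route as the paper: pull back the canonical basis of $G_1=H^f$ by $f^{-1}$, derive (1), (3) and (4) by iterating and cancelling $f$ along the chains $G_k$, $H_k$, and obtain (5) from Fitting's theorem applied to $G=HH^f$. You merely supply details the paper leaves implicit (why $(G,H^f,f^{-1})$ is a simple triple, and why the base case $G_{n-2}=\langle a_{n-1},a_n\rangle$ is abelian, which comes down to $[a_{n-1},a_n]\in H_{n-2}\cap G_{n-1}=H_{n-1}=1$), so the strategy is identical.
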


Define $c$ as the maximum index $s$ such that $[a_1, a_2] = \cdots = [a_1, a_s] = 1$. Then, $2 \leq c \leq n$. We call $c$ the {\it cut-off} point of the group $G$, and say $G$ is of $G(n,c)$-type. We note that $G$ abelian is equivalent to $c=n$. The cut-off point $c$ is an important parameter in an algorithmic construction of non-abelian $SSP$-groups. Specifically, for a non-abelian $SSP$-group $G$ of type $G(n,c)$, its subgroup $H$ inherits the same cut-off point $c$. Consequently, $H$ is an $SSP$-group of type $G(n-1,c)$.

\begin{theorem}
 For $n \geq 3$, the polycyclic collection formulas of $G$ are consequences of the closure under the application of powers of $f$ to the set of relations 
\begin{itemize}
    \item[i)] $a_1^p=e$ (where $p$ a prime number or infinite)
    \item[ii)] $[a_1, a_j] = e \quad (2 \leq j \leq c),$
    \item[iii)] $[a_1, a_j] = a_2^{i_2} \dots a_{j-1}^{i_{j-1}} \quad (c+1 \leq j \leq n)$ 
\end{itemize}
called the essential relations of $G$ with respect to the set of generators $S=\{a_1,\ldots, a_n\}$.
\end{theorem}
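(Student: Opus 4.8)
\emph{Proof idea.} The plan is first to identify what the collection formulas of $G$ relative to the canonical basis $S=\{a_1,\dots,a_n\}$ actually are, and then to produce every one of them from the essential relations by iterating the index shift $a_k\mapsto a_{k+1}$ induced by $f$. Recall that, relative to the descending series $G_i=\langle a_{i+1},\dots,a_n\rangle$ ($0\le i\le n$), a polycyclic presentation of $G$ is prescribed by one power relation giving the normal form of each $a_i^{\,p}$ ($1\le i\le n$) together with one commutator relation giving the normal form of each $[a_i,a_j]$ with $1\le i<j\le n$; the conjugate rules $a_j^{\,a_i^{\pm1}}$ and the reductions of powers $a_i^{\,k}$ with $k\ge p$ are then formal consequences of these together with collection itself. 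Hence it suffices to show that each power relation and each commutator relation belongs to the set $R$ obtained by closing relations (i)--(iii) under the shift induced by $f$ --- where a shift may be applied to a relation only when every generator occurring in it has index at most $n-1$.

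For the power relations, if $p$ is infinite there is nothing to prove; if $p$ is a prime then $a_1^{\,p}=e$ is relation (i), and since $a_1^{\,p}\in H=\langle a_1,\dots,a_{n-1}\rangle$, applying $f$ --- which sends $a_k$ to $a_{k+1}$ --- gives $a_2^{\,p}=e$; as $a_1,\dots,a_{n-1}\in H$ (here $n\ge3$), iterating yields $a_i^{\,p}=e$ for every $i$, and these are exactly the power collection formulas. For the commutator relations I would use the shift identity $W(a_1,\dots,a_l)^{f}=W(a_2,\dots,a_{l+1})$ (valid for $l\le n-1$), which applied to a commutator word gives $[a_i,a_j]=[a_1,a_{j-i+1}]^{\,f^{\,i-1}}$ for all $1\le i<j\le n$. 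Since $2\le j-i+1\le n$, the relation $[a_1,a_{j-i+1}]=\cdots$ is one of the essential relations: by the definition of the cut-off point $c$ it is trivial when $j-i+1\le c$ (case (ii)) and equals $a_2^{i_2}\cdots a_{j-i}^{i_{j-i}}$ when $j-i+1>c$ (case (iii)). Applying $f$ to it the required $i-1$ times then turns it into $[a_i,a_j]=e$, respectively $[a_i,a_j]=a_{i+1}^{i_2}\cdots a_{j-1}^{i_{j-i}}$, which is already in collected form. Thus every power and commutator collection formula lies in $R$, and the theorem follows.

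The step I expect to demand genuine care is checking that these $i-1$ successive applications of $f$ are all admissible, i.e.\ that no generator of index $n$ appears before the final shift: the relation reached after $k$ of the shifts involves generators of index at most $j-i+k+1$, and the last shift is applied to a relation of top index $j-1\le n-1$, so every shift is allowed. That $f$ is a monomorphism of $H$ with $a_k^{f}=a_{k+1}$ is what guarantees each shifted relation is again a genuine relation of $G$; everything else is routine bookkeeping of indices and exponents.
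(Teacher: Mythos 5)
Your argument is correct and is essentially the paper's own: the paper phrases the reduction as an induction on $n$ (the collection formulas internal to $H$ and $H^f$ are covered by the inductive hypothesis, and the only genuinely new ones involve $a_1$ and $a_n$ together, where relation (iii) with $j=n$ is invoked), and that induction unwinds to exactly your explicit $(i-1)$-fold application of the shift $[a_i,a_j]=[a_1,a_{j-i+1}]^{f^{i-1}}$ together with your admissibility check that the top index stays at most $n-1$ before the final shift. Your dismissal of the inverse-conjugation rules as formal consequences is at the same (terse) level of justification as the paper's one-line treatment of its third collection formula via $[a_1^{-1},a_n]\in[a_n,a_1]\langle a_2,\dots,a_{n-1}\rangle$.
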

 \begin{proof}
For a general polycyclic group $P$ with subnormal series $\{P_i\ (0 \leq i \leq n-1)\}$ and polycyclic basis $B = \{b_1, b_2, \dots, b_n\}$, where $b_i$ generates $P_i$ modulo $P_{i+1}$ ($0 \leq i \leq n-1$), its presentation with respect to $B$ is expressed by polycyclic collection formulas having one of three forms:
    \begin{enumerate}
        \item $b_i^{m_i} = b_{i+1}^{u_{i+1}} \dots b_n^{u_{n}}$ (if $m_i$ is finite),
        \item $b_{i+1} b_i = b_i^{v_{i}} b_{i+1}^{v_{i+1}} \dots b_n^{v_{n}}$,
        \item $b_{i+1}^{-1} b_i = b_i^{w_{i}} b_{i+1}^{w_{i+1}} \dots b_n^{w_{n}}$,
    \end{enumerate}
    where $u_j$, $v_j$, $w_j$ are integers.

    In the case of $SSP$-groups, since $p = o(a_i)$ is either a prime number or infinite, the first formula is superfluous. The second formula follows from
    \[
    [a_1, a_n] \in \langle a_2, \dots, a_{n-1} \rangle
    \]
    and by induction on $n$. While the third formula follows from
    \[
    [(a_1)^{-1}, a_n] \in [a_n, a_1] \langle a_2, \dots, a_{n-1} \rangle = \langle a_2, \dots, a_{n-1} \rangle
    \]
    and by induction on $n$.
    \end{proof}

\section{First Examples}\label{examples}
\begin{enumerate}
    \item Let $G$ be an $SSP$-group, $n \geq 3$, and the cut-off point $c = n-1$. Then, the essential relations are
\begin{itemize}
    \item[i)] $a_1^p=e,$
    \item[ii)] $[a_1, a_i] = e \quad (2 \leq i \leq n-1),$
    \item[iii)] $[a_1, a_n] = a_2^{k_2} \dots a_{n-1}^{k_{j-1}}\neq e$ .
\end{itemize}
    for some integers $k_2, \dotsc, k_{n-1}$.

Clearly, $G$ splits as $\langle a_1, \dotsc, a_{n-1} \rangle \rtimes \langle a_n \rangle$. Furthermore, $G$ has nilpotency class $2$ and
    \[
    Z(G) = \langle a_2, \dotsc, a_{n-1} \rangle.
    \]

    \item There are no groups of $G(4, 2)$-type. Thus, for a non-abelian $SSP$-group $G$ of rank $4$, 
    $H = \langle a_1, a_2, a_3 \rangle$ is abelian and the cut-off point  $c$ of $G$  is  $c = 3.$
    
\begin{proof}
     Let $G$ be an $SSP$-group of $G(4, 2)$-type. We have $[a_1, a_{3}]=(a_2)^x\neq e$ and so $[a_2, a_{4}]=(a_3)^x\neq e$. Furthermore, we have $[a_1, a_{4}]=(a_2)^u(a_3)^v$. The Hall-Witt identity $$[a_1, a_3,a_4]\cdot [a_4,a_1,a_3]\cdot [a_3,a_4,a_1]=e$$ holds. The second and third terms are trivial, thus $[a_1, a_3,a_4]=[(a_2)^x,a_4]=[a_2,a_4]^x=(a_3)^{x^2}=e$, that is a contradiction.
\end{proof}

    \item The possible cut-off points for $n = 5$ are $c=4$ and $5$,  and we know that the corresponding groups are realizable.
    \begin{proof}
   Let $G$ be an $SSP$-group of $G(5,3)$-type. Then, $$[a_1 , a_3] = e,$$ $$[a_1 , a_4] = a_2^u a_3^v\neq e$$ $$[a_2 , a_5] = a_3^u a_4^v\neq e$$ $$[a_1 , a_5] = a_2^x a_3^y a_4^z.$$ Then, $G' \leq \langle a_2, a_3, a_4\rangle,$ is an abelian group. The Hall-Witt relation $$[a_1 , a_4, a_5]\cdot[a_5 , a_1, a_4]\cdot[a_4 , a_5, a_1] = e$$ holds. As the second and third terms are trivial, we have $$e = [a_1 , a_4, a_5] = [a_2^u a_3^v, a_5]= = [a_2^u, a_5] = (a_3^u a_4^v)^u = a_3^{u^2} a_4^{uv} $$ $u=0$.

Similarly, the Hall-Witt relation $$[a_1 , a_2, a_5]\cdot [a_5 , a_1, a_2]\cdot [a_2 , a_5, a_1] = e,$$ implies $v=0$. Thus, $[a_1 , a_4] = e$; that is a contradiction.     
    \end{proof}
    \item Let $n = 6$. Suppose $G$ is of $G(6, c)$-type. Then $G_1$ is of $G(5, 4)$ or $G(5, 5)$-type, and therefore for $G$, the possible cut-off points are $c = 4, 5, 6$. We know that $G$ is realizable for $c = 5, 6$.

    \begin{enumerate}
        \item Suppose $G$ is an SSP-group of $G(6, 4)$-type. Then,
        \[
        [a_1, a_i] = e \quad \text{for } 1 \leq i \leq 4;
        \]
    there exist integers $u_2, u_3, u_4$ such that
        \[
        [a_1, a_5] = a_2^{u_2} a_3^{u_3} a_4^{u_4} \neq e
        \]
        \[
        [a_2, a_6] = a_3^{u_2} a_4^{u_3} a_5^{u_4} \neq e
        \]
        \[
        [a_1, a_6] = a_2^{x_2} a_3^{x_3} a_4^{x_4} a_5^{x_5}.
        \]
        We note that
        \[
        H_1 = H \cap H^f = \langle a_2, a_3, a_4, a_5 \rangle
        \]
        is an abelian normal subgroup of $G$, and $G / H_1$ abelian; that is, $G$ is metabelian. We note further that $\langle a_3, a_4 \rangle$ is central in $G$.
        Now, as we have done in the analysis of groups of $G(5,3)$-type, we employ the two Hall-Witt relations
        \[
         [a_1, a_5, a_6][a_6, a_1, a_5][a_5, a_6, a_1] = e,
        \]
        \[
         [a_1, a_2, a_6][a_6, a_2, a_1][a_1, a_6, a_2] = e,
        \]
        to conclude that
        \[
        u_2 = 0 = u_4.
        \]
        Therefore,
        \[
        [a_1, a_5] = a_3^{u_3}, \quad u_3 \ne 0,
        \]
        \[
        [a_2, a_6] = a_4^{u_3},
        \]
        and
        \[
        [a_1, a_6] = a_2^{x_2} a_3^{x_3} a_4^{x_4} a_5^{x_5}
        \]
        for some integers $x_i$, $(2 \leq i \leq 5)$. We compute,
        \[
        [a_1, a_6, a_1] = [(a_5)^{x_5}, a_1] = [a_5, a_1]^{x_5} = a_3^{-u_3 x_5},
        \]
        \[
        [a_1, a_6, a_6] = [(a_2)^{x_2}, a_6] = ([a_1, a_5]_1)^{x_2} = a_4^{u_3 x_2}.
        \]
        Therefore, $G$ has nilpotency class at most $3$, where
        \[
        \gamma_2(G) = \langle a_3^{u_3}, a_4^{u_3}, a_2^{x_2}, a_5^{x_5} \rangle,
        \]
        \[
        \gamma_3(G) = \langle a_3^{-u_3 x_5}, a_4^{u_3 x_2} \rangle.
        \]
        In particular, $\gamma_3(G)$ is trivial if and only if $x_2 = x_5 = 0$.

        \item Suppose $o(a_i) = 2$. Then as $\{a_2, a_3, a_4, a_5\}$ is commutative,
        \[
        [a_1, a_6] = a_2^{x_2} a_3^{x_3} a_4^{x_4} a_5^{x_5}
        \]
        is either trivial or an involution, and therefore $\langle a_1, a_6 \rangle$ is either abelian or dihedral of order $8$. Thus, $\gamma_3(G)$ is trivial.

        \item The group $G$ decomposes as
        \[
        G = (\langle a_1, a_2, a_3, a_4 \rangle \rtimes \langle a_5 \rangle) \rtimes \langle a_6 \rangle.
        \]
        where
        \[
        [a_i, a_5] = e \quad (2 \leq i \leq 4),
        \]
        \[
        a_1^{a_5} = a_1 a_3^{u_3}, \quad u_3 \ne 0,
        \]
        \[
        [a_i, a_6] = e \quad (3 \leq i \leq 5),
        \]
        \[
        a_1^{a_6} = a_1 a_2^{x_2} a_3^{x_3} a_4^{x_4} a_5^{x_5},
        \]
        \[
        a_2^{a_6} = a_2 a_4^{u_3}.
        \]

        \item We construct $G$ starting anew with the presentation
        \[
        H = \langle a_1, a_2, a_3, a_4, a_5 \mid a_i^p = e, [a_i, a_5] = e \quad (2 \leq i \leq 4), [a_1, a_5] = a_3^{u_3}, \quad u_3 \ne 0 \rangle,
        \] and by defining the action of $a_6$ on $H$ as an extension of
        \[
        [a_i, a_6] = e \quad (3 \leq i \leq 5),
        \]
        \[
        a_1^{a_6} = a_1 a_2^{x_2} a_3^{x_3} a_4^{x_4} a_5^{x_5},
        \]
        \[
        a_2^{a_6} = a_2 a_4^{u_3}.
        \]
        To prove that $a_6$ is an automorphism of $H$ we check
        \begin{enumerate}
            \item[(i)] $\{a_1, a_2, a_3, a_4, a_5\}^{a_6}$ generates $H$,
            \item[(ii)] $[a_i, a_5]^{a_6} = e \quad (2 \leq i \leq 4)$,
            \[
            [a_1, a_5]^{a_6} = (a_3^{u_3})^{a_6}.
            \]
        \end{enumerate}
        Part (i) is straightforward. Part (ii) reduces to checking
        \[
        [a_2, a_5]^{a_6} = e, \quad [a_1, a_5]^{a_6} = a_3^{u_3}
        \]
        which are also clear. To prove $o(a_6) = p$, we note that the centralizer of $a_2$ in $\langle a_6 \rangle$ is trivial, since
        \[
        (a_2)^{a_6^k} = a_2 a_4^{k u_3}
        \]
        for all integers $k$, and from $u_3 \ne 0$. Finally, we show that the function $f$ defined on $\{a_1, \dotsc, a_6\}$ by $a_i \mapsto a_{i+1}$ $(1 \leq i \leq 5)$ extends to the intended similarity on $G$.

        \item Let $p$ be odd or infinite. Then, $G$ has nilpotency class $3$ if and only if the pair $(x_2, x_5)$ is different from $(0, 0)$.
    \end{enumerate}
\end{enumerate}

\section{Metabelian $SSP$-groups}
The Hall-Witt identity for a metabelian group $G$ has the simplified form
\[
[x, y, z][z, x, y][y, z, x] = e
\]
for all $x, y, z$ in $G$. From the Hall-Witt identity, we will be using in our analysis of metabelian groups of $G(n+1, c)$-type the relations
\[
W(1, i, n+1):\quad [a_1, a_i, a_{n+1}] [a_{n+1}, a_1, a_i]  [a_i, a_{n+1}, a_1] = e,
\]
where $1 \leq i \leq n+1$. Given the symmetry $i \mapsto n+2 - i$ on the interval $[2, n]$, we refer to the pair $W(1, i, n+1)$, $W(1, n+2 - i, n+1)$ as {\it symmetric}.

The relations $W(1, i, n+1)$ will be important in identifying $c$ as a numerical function $c(n, j)$ in terms of the decomposition
\[
n = 3 q(n) + r(n),\quad \text{where } q(n) \geq 0,\ r(n) = 0,1,2.
\]
Define the numerical function
\[
c(n, 0) = 2 q(n) + r(n)
\]
\[
c(n, j) = c(n, 0) + j,\quad \text{where } 0 \leq j \leq q(n),
\]
Note that $c(n, q(n)) = n$.

Given a metabelian $G(n, c)$-type group $G$. It will be shown that $c = c(n, j)$ for some $0 \leq j \leq q(n)$.

Computations in Section 3 confirm the coincidence of $c$ with the corresponding values of $c(n, j)$:
\[
c(3, 0) = 2 = c(2, 0);\quad c(4, 0) = 3 = c(3, 1);\quad c(5, 0) = 4 = c(4, 1) = 4,
\]
\[
c(6, 0) = 4,\quad c(6, 1) = 5,\quad c(6, 2) = 6.
\]

\begin{theorem}\label{cutpoint}
Let $G$ be a metabelian group of type $G(n, c)$, $n \geq 3$. Then,
\begin{enumerate}
    \item[(1)] $c = c(n, j)$ for some $0 \leq j \leq q(n)$,
    \item[(2)] $[a_1, a_{c+1}] \in \langle a_{q(n)+1 - j}, \dotsc, a_{q(n)+1 + r(n) + 2j } \rangle$, a non-trivial element of $G$.
    \item[(3)] Let $G$ be non-abelian. Then sequences of liftings of $G$ have a bounded number of steps.
\end{enumerate}    
\end{theorem}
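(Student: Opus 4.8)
The plan is to establish all three parts simultaneously by strong induction on $n$, driven by the subgroup $H=\langle a_1,\dots,a_{n-1}\rangle$, which is again a metabelian $SSP$-group and inherits the cut-off point, hence is of type $G(n-1,c)$. It is convenient to observe first that, writing $w=[a_1,a_{c+1}]$, parts (1) and (2) together amount to the single statement that $w$ is a non-trivial element of $\langle a_{n-c+1},\dots,a_{2c-n+1}\rangle$: indeed non-emptiness of that interval forces $n-c+1\le 2c-n+1$, i.e.\ $3c\ge 2n$, i.e.\ $c\ge\lceil 2n/3\rceil=c(n,0)$, whence $c=c(n,j)$ with $j=c-c(n,0)$ and $0\le j\le n-c(n,0)=q(n)$; and substituting $c=2q(n)+r(n)+j$ turns $[\,n-c+1,\,2c-n+1\,]$ exactly into $[\,q(n)+1-j,\;q(n)+1+r(n)+2j\,]$. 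The base case $n=3$ is direct: an abelian group has $c=n=c(n,q(n))$ and (2),(3) are vacuous; a non-abelian $G(3,c)$-group has $c=2$ and $w=[a_1,a_3]\in\langle a_2\rangle\setminus\{e\}$ by the essential relations, which is the claimed window, and $G(3,2)$ admits no lifting.

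For the inductive step let $G$ be non-abelian of type $G(n,c)$ with $n\ge 4$. If $H$ is abelian then $c=n-1$ and $w=[a_1,a_n]\in\langle a_2,\dots,a_{n-1}\rangle=\langle a_{n-c+1},\dots,a_{2c-n+1}\rangle$ by the essential relations, so we are done; assume then $H$ is non-abelian. By the induction hypothesis applied to $H$ we have $c=c(n-1,j_0)$ for some $j_0$ and, translating the $G(n-1,c)$-window, $w\in\langle a_{n-c},\dots,a_{2c-n+2}\rangle\setminus\{e\}$; in particular $c\ge c(n-1,0)\ge 2(n-1)/3\ge n/2$ since $n\ge 4$, so $n\le 2c$. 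I will use throughout the commutation pattern $[a_i,a_j]=e$ whenever $|i-j|\le c-1$ (apply powers of $f$ to $[a_1,a_j]=e$, $j\le c$) and the identities $[a_i,a_{i+c}]=w^{f^{\,i-1}}$, all non-trivial since $f$ is injective; recall each $a_i$ has the same order $p$ (a prime or infinite), so each $w^{f^{\,t}}$ has order $p$.

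The core is to annihilate the two extreme coefficients of $w$ by two instances of the metabelian Hall--Witt identity. The relation $W(1,c+1,n)$ has its second term $[a_n,a_1,a_{c+1}]$ trivial (since $n\le 2c$, $a_{c+1}$ centralizes the subgroup $\langle a_2,\dots,a_{n-1}\rangle$ containing $[a_n,a_1]$) and its third term $[a_{c+1},a_n,a_1]$ trivial (since $n-c\le c$ gives $[a_{c+1},a_n]=[a_1,a_{n-c}]^{f^{c}}=e$); hence $[w,a_n]=e$. In the support $[\,n-c,\,2c-n+2\,]$ of $w$ all generators of index $\ge n-c+1$ commute with $a_n$, while $a_{n-c}$ commutes with $[a_{n-c},a_n]=w^{f^{\,n-c-1}}$; expanding $[w,a_n]$ and cancelling the resulting conjugation yields $\big(w^{f^{\,n-c-1}}\big)^{k}=e$, where $k$ is the $a_{n-c}$-coefficient of $w$, so $k=0$. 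The relation $W(1,n-c,n)$ has first term $[a_1,a_{n-c},a_n]$ trivial ($n-c\le c$) and second term $[a_n,a_1,a_{n-c}]$ trivial ($n\le 2c$), so $[\,w^{f^{\,n-c-1}},a_1\,]=[a_{n-c},a_n,a_1]=e$. The point now is that the inductive window places $w^{f^{\,n-c-1}}$ in $\langle a_{2n-2c-1},\dots,a_{c+1}\rangle$, so $a_{c+1}$ is the \emph{only} generator in its support that does not commute with $a_1$; factoring it out (those generators pairwise commute, the range having width $<c$) gives $[\,w^{f^{\,n-c-1}},a_1\,]=[a_{c+1},a_1]^{k'}=w^{-k'}$ up to an inner automorphism, where $k'$ is the $a_{c+1}$-coefficient of $w^{f^{\,n-c-1}}$, equivalently the $a_{2c-n+2}$-coefficient of $w$; hence $k'=0$. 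Thus $w\in\langle a_{n-c+1},\dots,a_{2c-n+1}\rangle$, proving (1) and (2); note this also disposes of the forbidden value $c=c(n,0)-1$ when $r(n)\in\{1,2\}$ — e.g.\ the non-existence of $G(4,2)$-groups — since then the interval is already empty, forcing $w=e$.

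For (3): a lifting $\widetilde G$ of a non-abelian $G$ of type $G(n,c)$ is a non-abelian $SSP$-group whose own $H$ is $G$, hence of type $G(n+1,c)$, and again metabelian in the situation under consideration; by (1) applied to $\widetilde G$, $c=c(n+1,j')$ with $j'=j-\big(c(n+1,0)-c(n,0)\big)$, and since $c(n+1,0)-c(n,0)$ equals $1$ when $r(n)\in\{0,1\}$ and $0$ when $r(n)=2$, we get $j'\le j$ with strict inequality unless $r(n)=2$. Along any chain of liftings $r$ runs cyclically through $0,1,2$, so the non-negative integer $j$ strictly decreases in at least two of every three consecutive steps; the chain therefore has at most $\tfrac32 j+O(1)$ steps and in particular is finite. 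I expect the main obstacle to be the two Hall--Witt computations of the third paragraph — especially the upper-coefficient one, where the surviving Hall--Witt commutator collapses to a single power of $w$ \emph{only} because the inductively known narrow window leaves $a_{c+1}$ as the unique generator of $w^{f^{\,n-c-1}}$ not commuting with $a_1$; verifying the vanishing of the extraneous terms and of this collapse is exactly where the commutator bookkeeping and the inequality $n\le 2c$ (itself produced by the induction) get consumed.
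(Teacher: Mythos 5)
Your proof is correct and follows essentially the same route as the paper: an induction in which the two symmetric Hall--Witt relations $W(1,c+1,\cdot)$ and $W(1,\cdot-c,\cdot)$ annihilate the two extreme coefficients of $[a_1,a_{c+1}]$, narrowing by one index on each side the window inherited from the smaller group (you induct from $H$ up to $G$, the paper from $G$ up to its lifting $\widehat{G}$ --- the same step). Your reparametrization of the window as $\langle a_{n-c+1},\dots,a_{2c-n+1}\rangle$ neatly absorbs the paper's auxiliary lemma relating $(q(n),r(n),j)$ to $(q(n+1),r(n+1),j')$ and its $j'<0$ case analysis, and your monotonicity argument for $j$ in part (3) is equivalent to the paper's observation that $c(n,j)=c$ has only finitely many solutions $(n,j)$.
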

\begin{proof}
The assertion is true for $3 \leq n \leq 6$, and for $G$ of type $G(n,c)$, where $c = n-1$ (that is, $j = q(n) - 1$). (see the examples).

Assume assertions (1) and (2) to be true for $n$. We may assume $0 \leq j \leq q(n) -1$.

Suppose $G$ lifts to $\widehat{G}$ an $SSP$-group of type $G(n+1, c)$. Then, we have in $\widehat{G}$,

\begin{enumerate}
    \item[(i)] $[a_1, a_{c+1}]= a_{q(n)+1 - j}^u\ a_{q(n)+2 - j}^{u'} \dotsm\ a_{q(n)+1 + r(n) + 2j - 1}^{v'}\ a_{q(n)+1 + r(n) + 2j}^{v}\neq e$,

    \item[(ii)] $[a_1, a_{n+1}] = a_2^{z_2} \dotsm a_n^{z_n}$.
\end{enumerate}

Consider
\[
W(1, c+1, n+1):\quad [a_1, a_{c+1}, a_{n+1}] [a_{n+1}, a_1, a_{c+1}] [a_{c+1}, a_{n+1}, a_1] = e.
\]
Evaluate the first of the three terms:
\begin{align*}
[a_1, a_{c+1}, a_{n+1}] &= [ a_{q(n)+1 - j}^u\ a_{q(n)+2 - j}^{u'} \dotsm\ a_{q(n)+1 + r(n) + 2j - 1}^{v'}\ a_{q(n)+1 + r(n) + 2j}^{v},\ a_{n+1} ] \\
&= [ a_{q(n)+1 - j}^u,\ a_{n+1} ] \\
&\quad \text{(since $a_{n+1}$ commutes with the elements $a_{q(n)+2 - j},\ \dotsc,\ a_{q(n)+1 + r(n) + 2j}$)} \\
&= [ a_1^u,\ a_{n+1 - q(n) + j} ]_{q(n) - j} \\
&= [ (a_1)^u,\ a_{2 q(n) + r(n) + j +1} ]_{q(n) - j} \\
&= [ a_1,\ a_{c +1} ]^{u}_{q(n) - j}.
\end{align*}
The next term:
\[
[ a_{n+1},\ a_1,\ a_{c+1} ] = [ a_2^{z_2} \dotsm a_n^{z_n},\ a_{c+1} ] = [ a_1^{z_2} \dotsm a_{n-1}^{z_n},\ a_c ]_{1} = e.
\]
Now, since
\[
[ a_{c+1},\ a_{n+1} ] = [ a_1,\ a_{n - c +1} ]_c = [ a_1,\ a_{n - (2 q(n) + r(n) + j) +1} ]_c
\]
\[
= [ a_1,\ a_{q(n) +1 - j} ]_c,
\]
and $q(n) +1 - j \leq c$, we have $[ a_{c+1},\ a_{n+1} ] = e$, and the third term is
\[
[ a_{c+1},\ a_{n+1},\ a_1 ] = e.
\]
Therefore, the above Hall-Witt relation reduces to
\[
[ a_1,\ a_{c+1},\ a_{n+1} ] = [ a_1,\ a_{c +1} ]^{u}_{q(n) - j} = e,
\]
\[
[ a_1,\ a_{c +1} ]^{u} = e,
\]
and thus $u = 0$, follows, since $[ a_1,\ a_{c +1} ]$ is non-trivial.

Likewise, expanding the relation $W(1, n+1 - c, n+1)$ symmetric to $W(1, c+1, n+1)$, yields
\[
v = 0.
\]
Hence, in $\widehat{G}$, the expression in (i) reduces to
\[
(ii)^{*} \quad [a_1, a_{c+1}] = a_{q(n)+2 - j}^{u'} \dotsm a_{q(n)+1 + r(n) + 2j - 1}^{v'}\quad \text{non-trivial}.
\]
\begin{lemma}
There exists $0 \leq j' \leq q(n+1) -1$ such that
\[
q(n)+2 - j = q(n+1) +1 - j',\] and, \[ q(n)+1 + r(n) + 2j -1 = q(n+1) +1 + r(n+1) + 2j'. \]   
\end{lemma}

In other words,
$ j' = q(n+1) - q(n) + (j -1)$  implies that  $q(n)+1 + r(n) + 2j -1 = q(n+1) +1 + r(n+1) + 2j'.$

\begin{proof}
Using that $3 q(n+1) + r(n+1) = n+1$, we have that
\begin{align*}
q(n+1) +1 + r(n+1) + 2j' &= q(n+1) +1 + r(n+1) + 2 ( q(n+1) - q(n) + (j -1) )  \\
&= q(n+1) +1 + r(n+1) + 2 q(n+1) - 2 q(n) + 2 (j -1) \\
&= 3 q(n+1) + r(n+1) - 2 q(n) + 2 (j -1) +1 \\
&= q(n) +1 + r(n) + 2j -1.
\end{align*}
\end{proof} 
If $j' < 0$, then $j = 0$ and $q(n+1) = q(n)$. The last is equivalent to $n = 3 q(n)$ or $3 q(n) +1$. Thus, from (i), $[ a_1, a_{c+1} ]$ is non-trivial and it belongs to the subgroups $\langle a_{q(n)+1} \rangle$, or $\langle a_{q(n)+1}, a_{q(n)+2} \rangle$. However, by $(ii)^{*}$, both possibilities collapse in $\widehat{G}$, which is a contradiction.

Since $c' = c = 2 q(n) + r(n) + j$ and $j' = q(n+1) - q(n) + (j -1)$ we reach $c' = 2 q(n+1) + r(n+1) + j'$. In view of the above lemma, we have in $\widehat{G}$,
\[
[ a_1, a_{c+1} ] = [ a_1, a_{c'+1} ] = a_{q(n+1) +1 - j'}^x \dotsm a_{q(n+1) +1 + r(n+1) + 2j'}^y \quad \text{non-trivial}.
\] This proves item $(2)$. As for item $(3)$, this simply follows from the fact that given $c$, the number of solutions $(n,j)$ for $c(n,j)=c$ is finite.
\end{proof}

\subsection{The center $Z(G)$ of $G$}

\begin{prop}
 Let $G$ be a metabelian group of $G(n, c)$-type. Define

$\Sigma_1(n, j) = \langle a_1, \dots, a_c \rangle$ and  $\Sigma_n(n, j) = \Sigma_1(n, j)^{f^{q(n)-j}} = \langle a_{q(n)+1-j}, \dots, a_n \rangle$.

Then,
\begin{enumerate}
    \item $\Sigma_1(n, j)$ and $\Sigma_n(n, j)$ are free abelian $p$-groups of rank $c$ and are  maximal abelian in $G$;
    \item $Z(G) = \Sigma_1(n, j) \cap \Sigma_n(n, j)
    = \langle a_{q(n)+1-j}, \dots, a_c \rangle$  has rank at least  $n/3$;
    \item $G = \Sigma_1(n, j) \cdot \Sigma_n(n, j)
    = \langle a_1, \dots, a_{q(n)-j} \rangle \, Z(G) \, \langle a_{c+1}, \dots, a_n \rangle$;
    \item  If $G$ is non-abelian, then
$[a_1, a_{c+1}]_{k}$  is a linearly independent subset  of $Z(G)$ for all $0 \leq k \leq q(n) - j-1$.
\end{enumerate}   
\end{prop}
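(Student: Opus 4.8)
Write $q=q(n)$, $r=r(n)$, so $n=3q+r$ and $c=2q+r+j$ with $0\le j\le q$; if $G$ is abelian then $j=q$, $c=n$ and $\Sigma_1(n,j)=\Sigma_n(n,j)=G=Z(G)$, so assume $j\le q-1$. Fix the canonical basis $\{a_1,\dots,a_n\}$: then $a_{i+1}=a_i^{f}$, all $a_i$ have the same order $p$, they form a polycyclic basis, and $G_i=\langle a_{i+1},\dots,a_n\rangle$. Two identifications drive the proof. First, $\Sigma_n(n,j)=\langle a_{q+1-j},\dots,a_n\rangle$ is the subnormal term $G_{q-j}$, so each of its elements has a unique collected form supported on $\{a_{q+1-j},\dots,a_n\}$. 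Second, $\Sigma_1(n,j)=\langle a_1,\dots,a_c\rangle$ maps onto $G/G_c$, where $G_c=\langle a_{c+1},\dots,a_n\rangle$, and there the images of $a_1,\dots,a_c$ form a polycyclic basis. The one computational device used throughout is that for $i<k$ one has $[a_i,a_k]=[a_1,a_{k-i+1}]^{f^{i-1}}$ (valid since $f$ is a monomorphism and by the word property, provided $k\le n$); hence $[a_i,a_k]=e$ whenever $k-i+1\le c$, and applying powers of $f$ shifts the essential relations and Theorem~\ref{cutpoint}(2) rigidly upward along the chain. Everything is routine except for one point — that $\Sigma_1(n,j)$ and $\Sigma_n(n,j)$ are \emph{maximal} abelian — which I treat last.

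\textbf{Items (1) and (3).} For $1\le i<k\le c$ one has $k-i+1\le c$, hence $[a_i,a_k]=e$; so $\Sigma_1(n,j)$ is abelian, and applying the monomorphism $f^{q-j}$ so is $\Sigma_n(n,j)=\Sigma_1(n,j)^{f^{q-j}}=G_{q-j}$. The surjection $\Sigma_1(n,j)\twoheadrightarrow G/G_c$ is injective: $\Sigma_1(n,j)$ is abelian and generated by $\le c$ elements of order $p$, so it cannot strictly surpass $G/G_c$ (compare orders if $p$ is a prime, Hirsch lengths if $p$ is infinite). Therefore $\Sigma_1(n,j)\cap G_c=1$, $\Sigma_1(n,j)G_c=G$, $\{a_1,\dots,a_c\}$ is a basis of $\Sigma_1(n,j)$, and $\Sigma_1(n,j)$ is free abelian of rank $c$ — elementary abelian if $p$ is a prime, $\Z$-free if $p$ is infinite (using that $G$ is then torsion-free); apply $f^{q-j}$ for $\Sigma_n(n,j)$. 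The same computation with $q-j$ in place of $c$ shows $\langle a_1,\dots,a_{q-j}\rangle$ is abelian with $G=\langle a_1,\dots,a_{q-j}\rangle\,G_{q-j}$ and trivial intersection, and that $G_c=\langle a_{c+1},\dots,a_n\rangle$ is abelian (here $k-i+1\le n-c=q-j\le c$). Since $\Sigma_1(n,j)$ and $\Sigma_n(n,j)$ together contain all the $a_i$, $G=\Sigma_1(n,j)\Sigma_n(n,j)=\langle a_1,\dots,a_{q-j}\rangle\,\langle a_{q+1-j},\dots,a_c\rangle\,\langle a_{c+1},\dots,a_n\rangle$, and once item (2) identifies the middle factor as $Z(G)$ this is the stated triple decomposition, all three factors free abelian of the stated type.

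\textbf{Items (2) and (4).} First, $\langle a_{q+1-j},\dots,a_c\rangle\le Z(G)$: for $q+1-j\le m\le c$ and any $i$, if $i\le m$ then $[a_i,a_m]=[a_1,a_{m-i+1}]^{f^{i-1}}=e$ because $m-i+1\le c$; if $i>m$ then $[a_m,a_i]=[a_1,a_{i-m+1}]^{f^{m-1}}=e$ because $i-m+1\le n-(q+1-j)+1=c$. Its rank is the number of its generators, $q+r+2j\ge q+r=(n+2r)/3\ge n/3$. For item (4) — which uses only this inclusion — fix $0\le k\le q-j-1$; applying $f^{k}$ to Theorem~\ref{cutpoint}(2) gives $[a_1,a_{c+1}]_k=[a_1,a_{c+1}]^{f^{k}}\in\langle a_{q+1-j+k},\dots,a_{q+1+r+2j+k}\rangle$, whose top index is $\le c$, so each $[a_1,a_{c+1}]_k$ is a non-trivial element of $\langle a_{q+1-j},\dots,a_c\rangle\le Z(G)$ ($f$ a monomorphism and $[a_1,a_{c+1}]\ne e$). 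Because $f$ shifts a collected form rigidly by one index, the largest index carrying a non-zero coefficient in $[a_1,a_{c+1}]_k$ is $u+k$, where $u$ is that index for $[a_1,a_{c+1}]_0$; the indices $u,u+1,\dots,u+(q-j-1)$ all lie in $[q+1-j,c]$ and are distinct, so $a_u,\dots,a_{u+(q-j-1)}$ are distinct basis elements of $Z(G)$ and the coordinate matrix of $\{[a_1,a_{c+1}]_k\}_k$ is triangular with non-zero diagonal; hence the family is linearly independent. Finally, granting maximality (hence self-centrality) of $\Sigma_1(n,j)$ and $\Sigma_n(n,j)$, we get $Z(G)\le C_G(\Sigma_1(n,j))\cap C_G(\Sigma_n(n,j))=\Sigma_1(n,j)\cap\Sigma_n(n,j)$; comparing the collected form of an element of the intersection seen inside $\Sigma_1(n,j)$ (support in $\{a_1,\dots,a_c\}$) and inside $\Sigma_n(n,j)=G_{q-j}$ (support in $\{a_{q+1-j},\dots,a_n\}$) gives $\Sigma_1(n,j)\cap\Sigma_n(n,j)=\langle a_{q+1-j},\dots,a_c\rangle$, already known to lie in $Z(G)$. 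Hence $Z(G)=\langle a_{q+1-j},\dots,a_c\rangle$.

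\textbf{The main obstacle: $\Sigma_1(n,j)$ maximal abelian.} A maximal abelian subgroup is precisely a self-centralizing abelian one, and $f^{q-j}$ carries the statement for $\Sigma_1(n,j)$ to that for $\Sigma_n(n,j)$, so it suffices to prove $C_G(\Sigma_1(n,j))=\Sigma_1(n,j)$. The plan: given $g\in C_G(\Sigma_1(n,j))$, use $G=\Sigma_1(n,j)G_c$ with trivial intersection to reduce to $g\in G_c$; if $g\ne e$, let $m\ge c+1$ be the least index occurring in its collected form, with non-zero exponent $d_m$, and put $t=m-c$, so $1\le t\le q-j\le c$, $a_t\in\Sigma_1(n,j)$, and $[a_t,g]=e$. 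Then $[a_t,a_m]=[a_1,a_{c+1}]_{\,t-1}$ is one of the non-trivial central elements of item (4) (as $0\le t-1\le q-j-1$); expanding $[a_t,g]$ in the metabelian group $G$ and using this centrality yields $[a_t,g]=[a_t,g']\cdot\bigl([a_1,a_{c+1}]_{\,t-1}\bigr)^{d_m}$ with $g'\in G_m$, and the goal becomes to show this product is non-trivial, contradicting $[a_t,g]=e$. This last non-cancellation is the delicate step I expect to cost the most effort: it requires controlling the supports of the commutators $[a_t,a_k]$ for $k>m$ — equivalently of $[a_1,a_\ell]$ for all $\ell>c+1$, where Theorem~\ref{cutpoint}(2) by itself only pins down $\ell=c+1$. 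I would obtain that control by an induction on $n$ through the subgroup $H=\langle a_1,\dots,a_{n-1}\rangle$, an $SSP$-group of type $G(n-1,c)$ with the same cut-off point, so that the present proposition is available for $H$ and transfers to $G$ via the lifting mechanism used in the proof of Theorem~\ref{cutpoint}.
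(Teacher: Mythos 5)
Your items (1) (free abelian of rank $c$, and the factorization $G=\Sigma_1\Sigma_n$), the inclusion $\langle a_{q(n)+1-j},\dots,a_c\rangle\le Z(G)$, the rank bound, and item (4) are all correct, and your leading-term/triangularity argument for (4) is essentially the paper's own. The genuine gap is exactly the step you flag yourself: maximality of $\Sigma_1(n,j)$, on which your entire derivation of $Z(G)=\Sigma_1(n,j)\cap\Sigma_n(n,j)$ then rests. Your direct plan (least index $m\ge c+1$ in the collected form of $g\in C_G(\Sigma_1)\cap G_c$, set $t=m-c$, and show $[a_t,g]\ne e$ by non-cancellation of $[a_1,a_{c+1}]_{t-1}^{\,d_m}$ against the contributions of $[a_t,a_k]$ for $k>m$) genuinely requires support control of $[a_1,a_\ell]$ for every $\ell>c+1$, which Theorem \ref{cutpoint}(2) supplies only for $\ell=c+1$; you acknowledge this and defer to ``an induction on $n$ through $H$'' without carrying it out. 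As stated that fallback is not obviously sufficient: maximality of the corresponding subgroup inside $H=\langle a_1,\dots,a_{n-1}\rangle$ says nothing about centralizing elements of $G$ whose collected form involves $a_n$, so the induction has to be arranged so that the offending element is already trapped inside the smaller group.

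The paper's induction does precisely that, and it sidesteps all support questions. After your (correct) reduction to $x=a_{c+1}^{m_1}\cdots a_n^{m_n}\in G_c$ centralizing $\Sigma_1(n,j)$, note that $x$ automatically commutes with $a_{c+1}$, since $G_c=\langle a_{c+1},\dots,a_n\rangle$ is abelian (for $c+1\le i<k\le n$ one has $k-i+1\le q(n)-j\le c$). Hence $x$ centralizes $\langle a_2,\dots,a_{c+1}\rangle$, which is the ``$\Sigma_1$'' of the $G(n-1,c)$-group $G_1=\langle a_2,\dots,a_n\rangle$ and is maximal abelian there by induction on $n$. Therefore $x\in\langle a_2,\dots,a_{c+1}\rangle\cap G_c=\langle a_{c+1}\rangle$, and a nontrivial power of $a_{c+1}$ cannot commute with $a_1$ because $[a_1,a_{c+1}^m]=[a_1,a_{c+1}]^m$ with $[a_1,a_{c+1}]$ a nontrivial element of order $p$ in the free abelian $p$-group $\langle a_{q(n)+1-j},\dots,a_c\rangle$. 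Separately, be aware that the paper does not route $Z(G)\le\Sigma_1\cap\Sigma_n$ through maximality at all: it writes $g=g_1g_2g_3$ along the triple factorization and kills the coordinates of $g_1$ and $g_3$ one at a time using $[a_i,a_{c+i}]=[a_1,a_{c+1}]_{i-1}\ne e$ together with $[a_i,a_{c+k}]=e$ for $i>k$; adopting that computation would let you close item (2) independently of the maximality statement and confine the inductive argument to item (1).
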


\begin{proof}    
(1) The first part is clear. We now prove that $\Sigma_1(n, j)$ and $\Sigma_n(n, j)$ are  maximal abelian in $G$. The proof proceeds by induction on $n$. The assertion is true for $n \leq 2$. Let $n \geq 3$ and $G$ be non-abelian. Let 
\[
x = a_{c(n,j)+1}^{m_1} \dots a_{n}^{m_n}.
\]
Suppose $\langle a_1, \dots, a_{c(n,j)}, x \rangle$ is abelian. Note that $\langle a_{c(n,j)+1}, \ldots, a_n\rangle$ is an abelian subgroup, and therefore $x$ commutes with $a_{c(n,j)+1}$. On the other hand, $\langle a_2, \dots, a_{c(n,j)+1}\rangle$ is a maximal abelian subgroup of $\langle a_2, \dots, a_{n} \rangle$. By induction, $\langle a_2, \dots, a_{c(n,j)+1}, x \rangle$ is not abelian, which leads to a contradiction. 

(2) We know that $\Sigma_1(n, j)$ is abelian and that $c = c(n, j) = 2q(n) + r(n) + j$. Then,
\[
Z(n, j) = \Sigma_1(n, j) \cap \Sigma_n(n, j) = \langle a_{q(n)-j+1}, \dots, a_c \rangle
\]
is a central subgroup of $G$.

(3) Since $G$ decomposes as
\[
G = \langle a_1, \dots, a_{q(n)-j} \rangle \, Z(n, j) \, \langle a_{c+1}, \dots, a_n \rangle,
\]
a product of three independent abelian groups, given $g \in G$, we decompose $g$ uniquely as $g = g_1 g_2  g_3$ with  $g_i$ in the  $i$-th component above. Thus, $g \in Z(G)$  if and only if $g' = g_1  g_3 \in Z(G)$, equivalently, $g_1$  commutes with $\langle a_{c+1}, \dots, a_n \rangle$,  and $g_3$ with $\langle a_1, \dots, a_{q(n)-j} \rangle$.

Now, $g_1 = a_1^{i_1} \dotsm a_{q(n)-j}^{i_{q(n)-j}}$ commutes with $a_{c+1}$ if and only if $a_1^{i_1}$ does (and so, $i_1 = 0$). Similarly, we show $g_1$, $g_3$ are trivial.

We note that $\operatorname{rank}(Z(G)) = c - (q(n)-j) = 2q(n) + r(n) + j - (q(n)-j) = q(n) + r(n) + 2j \geq n/3$.

(4) Let $k^* = q(n) - j-1$.

Since
\[
[a_1, a_{c+1}]_{k} \in \langle a_{q(n)+1-j+k}, \dots, a_{q(n)+1+r(n)+2j+k} \rangle,
\]
and
\[
q(n)+1 + r(n) + 2j + k^* = q(n)+1 + r(n) + 2j + (q(n) - j -1 ) = c(n, j),
\]
we have
\[
[a_1, a_{c+1}]_{k} \in \langle a_{q(n)+1 - j + k}, \dots, a_{c(n, j)} \rangle,
\]
and therefore, $k^*$ is maximum $k$ such that
\[
[a_1, a_{c+1}]_{k} \in Z(G).
\]
We continue the proof by writing
\[
w_{c+1} = [a_1, a_{c+1}] = (a_l)^{u_l} \dots (a_{c})^{u_{c}},
\]
where \( u_i \in \mathbb{Z}_p \) and \( u_l \neq 0 \). The leading term of \( w_{c+1} \) is \( (a_l)^{u_l} \). Define
\[
\text{Base}\{w_{c+1}\} = \{a_l, \dots, a_{c}\}.
\]
Then,
\[
w_{c+1+k} = [a_1, a_{c+1}]_{k} = (a_{l+k})^{u_l} \dots (a_{c+k})^{u_c},
\]
for \( 0 \leq k \leq q(n) - j - 1 \). The leading term of \( w_{c+1+k} \) is \( (a_{l+k})^{u_l} \), and
\[
\text{Base}(w_{c+1+k}) = \{a_{l+k}, \dots, a_{c+k}\}.
\]
Thus,
\[
\bigcup \{\text{Base}(w_{c+1+k}) \mid 0 \leq k \leq q(n) - j - 1\}
\]
is contained in the commutative set \( \{a_{q+1-j}, \dots, a_{n-1}\} \). Since the leading terms of \( w_{c+1+k} \) for \( 0 \leq k \leq q(n) - j - 1 \) form the set
\[
\{(a_{l+k})^{u_l} \mid 0 \leq k \leq q(n) - j - 1\},
\]
and \( u_l \neq 0 \), the independence of
\[
\{[a_1, a_{c+1}] \mid 0 \leq k \leq q(n) - j - 1\}
\]
follows.
\end{proof}

\textbf{Example.} Let $G$ be the Heisenberg group \[G = \langle a_1, a_2, a_3 \mid (a_i)^m = 1, [a_1, a_3] = a_2, [a_1, a_2] =[a_2, a_3] = e\rangle,\] where $m$ is finite or infinite. Then $G$ is a $SSP$  of $G(3,2)$-type where $f:a_1\to a_2 \to a_3$. More generally, using Proposition \ref{prop3},  we  can construct the group $P_k$ ($k \geq 2$) with $3k$-generators, having a triple decomposition into $3$ abelian subgroups:
$$P_k = \langle b_1, \ldots, b_k \rangle \cdot \langle b_{k+1}, \ldots, b_{2k} \rangle \cdot \langle b_{2k+1}, \ldots, b_{3k} \rangle,$$
where the second factor is central, and where the Heisenberg relations hold: $[b_1, b_{2k+1}] = b_{k+1}, \ldots, [b_k, b_{3k}] = b_{2k}$ . Thus 
 $P_k = \langle b_1, b_{2k+1} \rangle \times \cdots \times \langle b_k, b_{3k} \rangle$ is isomorphic to $G^k$. If we consider $n=3k$, then $P_k$ is $SSP$ of $G(n,2k)$-type with canonical base $S = \{b_1, b_2, \ldots, b_k, b_{k+1}, \ldots, b_{2k}, b_{2k+1}, \ldots, b_{3k}\}$. Moreover,  the cut-off point of $P_k$ is $c(n,j)=2q(n)=2k$ ($j=0$) and an interesting observation is that  by Theorem \ref{cutpoint} $P_k$  does not lift to a $SSP$ group of type $G(n+1,c)$ since $c(n+1,j)=2q(n+1)+r(n+1)+j=2q(n)+1+j'$.

\begin{theorem}
Groups of SSP-type are metabelian.    
\end{theorem}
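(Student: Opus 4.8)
The plan is to argue by induction on the Hirsch length $n$ that every $SSP$-group $G$ of type $G(n,c)$ is metabelian. The base cases $n \le 6$ are already settled by the explicit computations in Section 3 (and the case $c = n$, i.e.\ $G$ abelian, is trivial). So assume $n \ge 7$ and that every $SSP$-group of Hirsch length less than $n$ is metabelian; in particular $H = \langle a_1,\dots,a_{n-1}\rangle$ and $H^f = \langle a_2,\dots,a_n\rangle$, being $SSP$-groups of type $G(n-1,c)$, are metabelian. I would then set $N = H' \cdot (H^f)'$ and aim to show $N$ is abelian, since $G = H H^f$ forces $G' \le N$ and hence $G'' \le N' = e$.

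First I would record that $H' = \langle\, [a_1,a_j]^{f^t} : c+1 \le j \le n-1,\ t \ge 0\,\rangle$ is generated by the $f$-translates of the essential commutators $[a_1,a_j]$ sitting inside the commutative set $\{a_{q(n)+1-j},\dots,a_{n-1}\}$ (using item (4) of the Proposition on $Z(G)$ together with the essential relations), and likewise $(H^f)' = (H')^f$ lies in $\langle a_{q(n)+2-j},\dots,a_n\rangle$. Both $H'$ and $(H^f)'$ are abelian by the inductive hypothesis applied to $H$ and $H^f$. So the only thing to check is that an element of $H'$ commutes with an element of $(H^f)'$; equivalently, that $[\,[a_1,a_i]_s,\ [a_1,a_j]_t\,] = e$ for all admissible $i,j,s,t$. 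Since $H$ is metabelian of type $G(n-1,c)$, the cut-off structure from Theorem \ref{cutpoint} tells us that all the relevant commutators $[a_1,a_i]$ lie in the subgroup $\langle a_{q(n)+1-j},\dots,a_{c}\rangle$ after enough $f$-shifts, and by item (2) of Theorem \ref{cutpoint} and the Proposition, the $f$-translates that fail to be central are themselves of the form $[a_1,a_{c+1}]_k$ with $k$ in a bounded range; the remaining translates are central in $G$. Thus the commutator of two elements of $N$ reduces to a commutator of two elements each of which is, up to central factors, an $f$-translate of a single essential commutator living in a commutative set — hence trivial.

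The step I expect to be the main obstacle is controlling the commutator $[\,[a_1,a_i]_s,\ [a_1,a_j]_t\,]$ when \emph{both} arguments are non-central in $G$: a priori this lands in $G'$ and one must rule out a genuine $\gamma_3$-contribution escaping $N$. I would handle this by invoking the Hall--Witt relations $W(1,i,n)$ inside $H$ (which is already known metabelian, so Hall--Witt simplifies to the three-term metabelian identity) to pin down where $[a_1,a_i]$ can lie, exactly as in the proofs of nonexistence of $G(4,2)$- and $G(5,3)$-type groups and in the proof of Theorem \ref{cutpoint}: these force the non-central essential commutators into the narrow window $\langle a_{q(n)+1-j},\dots,a_{q(n)+1+r(n)+2j}\rangle$, whose $f$-translates all lie in one commutative set $\{a_{q(n)+1-j},\dots,a_{n-1}\}$ within $H$. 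Once every generator of $N$ is exhibited as lying in a common commutative subset of $G$ (after absorbing central factors), $N$ is abelian, $G'' = e$, and the induction closes. A secondary point requiring care is the boundary case $r(n)+2j$ large (i.e.\ $c$ close to $n$), where the window widens; but there $c = n-1$ or $c=n$ and the claim is covered by the already-established examples and the abelian case.
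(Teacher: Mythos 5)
Your reduction collapses at its first step. From $G = H H^f$ with $H$ and $H^f$ normal one gets $G' = H'\,(H^f)'\,[H,H^f]$, not $G' \le H'\,(H^f)'$: the mixed commutator subgroup $[H,H^f]$ is exactly where the new essential commutators $[a_i,a_n]$ live, and it is in general not contained in $N = H'(H^f)'$. The Heisenberg group of type $G(3,2)$ already shows this: there $H=\langle a_1,a_2\rangle$ and $H^f=\langle a_2,a_3\rangle$ are both abelian, so $N=e$, yet $G'=\langle a_2\rangle\neq e$. Hence proving $N$ abelian does not bound $G''$. To repair this you would have to replace $N$ by $G' = H'(H^f)'[H,H^f]$ and then control the element $[a_1,a_n]$ (respectively $[a_1,a_{n+1}]$ in a lifting), which a priori lies only in $\langle a_2,\dots,a_{n-1}\rangle$ --- a set that is not commutative unless $c\ge n-2$. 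Moreover, your appeal to Theorem \ref{cutpoint} and to the proposition on $Z(G)$ to locate these commutators inside $G$ itself is circular: both results are stated under the hypothesis that the group in question is already metabelian, so they may be applied to $H$ and $H^f$ but not to $G$ or $\widehat{G}$ at this stage of the induction.

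The paper's proof avoids all commutator bookkeeping by a different factorization: by induction $G=\langle a_1,\dots,a_c\rangle\cdot\langle a_{c+1},\dots,a_n\rangle$ is a product of two \emph{abelian} subgroups (the second factor is abelian because $n-c=q(n)-j\le c=2q(n)+r(n)+j$), the same remains true for a lifting $\widehat{G}$ since $n+1-c\le c$, and It\^o's theorem then yields that any group which is a product of two abelian subgroups is metabelian. Your route is not a simplification of this but an essentially harder path, and as written it does not close.
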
 
\begin{proof}
We proceed by induction on the number of canonical polycyclic generators $n$ of an $SSP$-group.

From Section 2, the assertion is true for $1 \leq n \leq 6$. Suppose $G$ is a metabelian group of $G(n, c)$-type and $\widehat{G}$ is a lifting of $G$.

Then, $G = \langle a_1, \dots, a_c \rangle \langle a_{c+1}, \dots, a_n \rangle$, a product of two abelian subgroups.

Now, given the form of $c= 2q(n)+r(n)+j$,
it follows that $\langle a_{c+1},...,a_{n+1}\rangle$ is abelian  and therefore $\widehat{G}$ is a product of two abelian subgroups. It follows from Ito's theorem that $\widehat{G}$ is metabelian.
\end{proof}

\textbf{Observations.} Let $G$ be a group of $G(n, c)$-type. Then,
\begin{enumerate}
    \item[(1)] Since $\operatorname{rank}(Z(G)) \geq n/3$, $\operatorname{rank}(Z(G)) = 1$ if and only if $n = 3$. In particular, if $p$ is a prime number, and $G = C_p \wr C_p$, then $|Z_{i+1}(G) / Z_i(G)| = p$ for $i = 1, \dots, p-2$ and $|G / Z_{p-1}(G)| = p^2$. Thus, $G$ is an $SSP$-group if and only if $p = 2$, thus $G$ is a dihedral group of order $8$. Also $G / Z_i(G)$ is not an $SSP$-group, for $m$ odd and $i = 0, \dots, p-3$.

    \item[(2)] Suppose $G$ is a $G(n, c)$-group which lifts to $\widehat{G}$ a $G(n+1, c)$-group. Since
    \[
    G = \langle a_1, \dots, a_{q(n)-j} \rangle \, Z(G) \, \langle a_{c+1}, \dots, a_n \rangle
    \]
    and
    \[
    Z(G) = \langle a_{q(n)-j+1}, \dots, a_{c(n, j)} \rangle,
    \]
    in calculating the Hall-Witt relations $W(1, l, n+1)$ in the lifting $\widehat{G}$, we may restrict $a_l$ to $Z(G) \setminus Z(\widehat{G})$ (that is, $l = q(n)-j+1$), or $a_l$ to $G \setminus Z(G)$ (that is, $l$ in $\{1, \dots, q(n)-j\} \cup \{c+1, \dots, n\}$).

    If $l = q(n)-j+1$, then
    \[
    [a_{n+1}, a_1, a_{q(n)-j+1}][a_{q(n)-j+1}, a_{n+1}, a_1] = e,
    \]
    and as $[a_{n+1}, a_1] \in \langle a_2, \dots, a_n \rangle$,
    \[
    [a_{q(n)-j+1}, a_{n+1}, a_1] = e.
    \]

    If $1 \leq l \leq q(n)-j$, then $[a_1, a_l] = e$ and so
    \[
    [a_{n+1}, a_1, a_l][a_l, a_{n+1}, a_1] = e.
    \]

    If $c+1 \leq l \leq n$, then $[a_l, a_n] = e$, and so
    \[
    [a_1, a_l, a_{n+1}] [a_{n+1}, a_1, a_l] = e.
    \]

    Altogether: the $W(1, l, n+1)$ relations restrict to one of three kinds:
    \[
    [a_{q(n)-j+1}, a_{n+1}, a_1] = e;
    \]
    \[
     \quad [a_{n+1}, a_1, a_l] \cdot [a_l, a_{n+1}, a_1] = e, \quad \text{for } 1 \leq l \leq q(n)-j;
    \]
    \[
     \quad [a_{n+1}, a_1, a_l] \cdot [a_1, a_l, a_{n+1}] = e, \quad \text{for } c+1 \leq l \leq n.
    \]

\end{enumerate}

\section{$SSP$-groups of nilpotency class 2.}

 Let $R(n,j)$ denote the set of statements in the group $G$ of $G(n,c)$-type: 

\begin{itemize}
    \item[i)] $a_1^p=e$.
    \item [ii)] $[a_1,a_i]=e$, for any $2\leq i\leq c(n,j)$.
    \item[iii)] $[a_1, a_{c(n,j)+1+k}]   \in      \langle a_{q(n)+1-j},\ldots,a_{q(n)+1+r(n)+2j+k}\rangle$ for any $0\leq k \leq q(n) -j -1$ and, $[a_1,a_{c(n,j)+1}]$ is nontrivial.
\end{itemize}

\begin{theorem}\label{nilclass2}
Let $G$ be a non-abelian group of $G(n,c)$-type.
Then $G$ is of nilpotency class 2 if and only if
\begin{enumerate}
    \item $c = c(n,j)$ for some $0\leq j\leq q(n)-1$;
    \item $G$ has the presentation
    \[
    G = \langle a_1,...,a_{n} \ | \ R(n,j)^{\langle f\rangle}  \rangle.
    \]
\end{enumerate}
\end{theorem}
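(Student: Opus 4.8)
The plan is to establish the two implications of the equivalence separately; in each direction the structural results already in hand — that $SSP$-groups are metabelian, that the $\langle f\rangle$-closure of the essential relations presents $G$, Theorem \ref{cutpoint}, and the description $Z(G)=\langle a_{q(n)+1-j},\dots,a_{c}\rangle$ for metabelian $G(n,c)$-groups from the Proposition on the centre — do most of the work, and what remains is index bookkeeping. Throughout I write $b=q(n)+1-j$ and $d_k=q(n)+1+r(n)+2j+k$, so that $R(n,j)$(iii) asserts $[a_1,a_{c+1+k}]\in\langle a_b,\dots,a_{d_k}\rangle$ for $0\le k\le q(n)-j-1$, and I note the arithmetic identities $c=2q(n)+r(n)+j$, $d_{q(n)-j-1}=c$, and $n-c-1=q(n)-j-1$.

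For the backward implication I would start from $c=c(n,j)$ with $0\le j\le q(n)-1$ together with the presentation $G=\langle a_1,\dots,a_n\mid R(n,j)^{\langle f\rangle}\rangle$. Since $G$ is an $SSP$-group it is metabelian, and since $c=c(n,j)$ the Proposition on the centre gives $Z(G)=\langle a_b,\dots,a_c\rangle$. The key point is that every subgroup occurring in clause (iii) of $R(n,j)$ is central: for $0\le k\le q(n)-j-1$ one has $d_k\le d_{q(n)-j-1}=c$, so $\langle a_b,\dots,a_{d_k}\rangle\subseteq Z(G)$, and hence $[a_1,a_m]\in Z(G)$ for every $m$ with $c+1\le m\le n$. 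An arbitrary commutator $[a_i,a_\ell]$ with $i<\ell$ equals $[a_1,a_{\ell-i+1}]^{f^{i-1}}$; it is trivial when $\ell-i+1\le c$, and otherwise, writing $\ell-i+1=c+1+k$ and using that $\ell\le n$ forces $i\le q(n)-j-k$, a one-line check shows the shift by $f^{i-1}$ keeps the support of $[a_1,a_{c+1+k}]$ inside $\{b,\dots,c\}$, so $[a_i,a_\ell]\in Z(G)$. Consequently $G'\le Z(G)$, i.e. $\gamma_3(G)=1$; as $G$ is non-abelian, its class is exactly $2$.

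For the forward implication, assume $G$ has class $2$. Then $G$ is metabelian, so Theorem \ref{cutpoint} gives $c=c(n,j)$ for some $0\le j\le q(n)$, and since $G$ is non-abelian we have $c<n=c(n,q(n))$, forcing $j\le q(n)-1$; this is part (1). For part (2), the $\langle f\rangle$-closure of the essential relations $a_1^p=e$, $[a_1,a_i]=e$ $(2\le i\le c)$ and $[a_1,a_m]=$ (a word in $a_2,\dots,a_{m-1}$) $(c+1\le m\le n)$ presents $G$; the first two families are $R(n,j)$(i),(ii) because $c=c(n,j)$, so it suffices to show each essential relation of the third kind already has the form prescribed by $R(n,j)$(iii). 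Write $m=c+1+k$, so $0\le k\le n-c-1=q(n)-j-1$. The non-triviality $[a_1,a_{c+1}]\ne e$ is exactly the definition of the cut-off point. For the support of $[a_1,a_m]$: it is a commutator, hence central, hence lies in $Z(G)=\langle a_b,\dots,a_c\rangle$, which already bounds the smallest index of its support below by $b$; and for every $t$ with $0\le t\le n-m=q(n)-j-1-k$ the element $[a_1,a_m]^{f^t}=[a_{1+t},a_{m+t}]$ is again a commutator of canonical generators of $G$, hence again central with support in $\{b,\dots,c\}$. Since applying $f^t$ shifts every index of the support up by $t$, taking $t=q(n)-j-1-k$ shows the largest index $c'$ of the support of $[a_1,a_m]$ satisfies $c'+t\le c$, i.e. $c'\le c-(q(n)-j-1-k)=d_k$, which is precisely what $R(n,j)$(iii) demands.

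I expect the main obstacle to be purely the index arithmetic in the forward direction: one must confirm that $q(n)-j-1-k\ge0$ on the stated range of $k$, that $f^t$ is genuinely applicable to $[a_1,a_m]$ (its support must remain in $\{1,\dots,n-t\}$), and that $[a_{1+t},a_{m+t}]$ is a legitimate commutator of canonical generators (so that class $2$ forces it central) — all of which reduce to the single inequality $m+t\le n$. Should one prefer an inductive formulation in $n$, the base cases are supplied by Section \ref{examples}, in particular the case $c=n-1$ (that is, $j=q(n)-1$), where $G$ splits as $\langle a_1,\dots,a_{n-1}\rangle\rtimes\langle a_n\rangle$ and is of class $2$ with centre $\langle a_2,\dots,a_{n-1}\rangle$.
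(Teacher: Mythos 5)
Your argument is correct, but it takes a genuinely different route from the paper's. For the forward direction the paper proceeds by induction on $n$ through liftings $G\mapsto\widehat G$: it writes the candidate essential relations of $\widehat G$ with undetermined exponents $x_k,y_k,z_l$ and then kills them one at a time by evaluating the Hall--Witt relations $W(1,c+1+k,n+1)$, their symmetric partners, and $W(1,n+1,l)$, using $\gamma_3=1$ to make each triple commutator vanish. You instead work directly in $G$ itself: class $2$ gives $G'\le Z(G)$, the metabelian analysis gives $Z(G)=\langle a_{q(n)+1-j},\dots,a_c\rangle$, and you sandwich the support of $[a_1,a_{c+1+k}]$ by observing that centrality of the commutator itself forces the lower bound $q(n)+1-j$, while centrality of its shift $[a_1,a_{c+1+k}]^{f^{\,n-m}}$ forces the upper bound $d_k=c-(q(n)-j-1-k)$. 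This is shorter, avoids the coefficient-by-coefficient eliminations, and makes it transparent why exactly the interval $\{q(n)+1-j,\dots,q(n)+1+r(n)+2j+k\}$ appears (it is pinched by centrality at the two ends of the $f$-orbit); the single point you must and do address is that $f^{\,t}$ is applicable, which reduces to $m+t\le n$. What the paper's lifting formulation buys in exchange is that it sets up verbatim the machinery reused in Section 6 (Theorem \ref{SSPHnil}), where the class-$3$ situation genuinely requires the finer Hall--Witt eliminations and the bookkeeping relating $j$ for $G$ to $j'$ for $\widehat G$. Your backward direction (checking that all shifted commutators land in $\langle a_{q(n)+1-j},\dots,a_c\rangle$, hence $G'\le Z(G)$) fills in what the paper dismisses as straightforward, and the index check $d_k+i-1\le c$ is the right one. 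Both versions share the same reliance on the Section 2 theorem that the $\langle f\rangle$-closure of the essential relations presents $G$, with consistency deferred to the matrix representations of Section 5, so no new gap is introduced there.
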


\begin{proof}
It is straightforward to see that a group with such a presentation is nilpotent of class 2. In the other direction, suppose $G$ has nilpotency class 2. The assertion is true for $n=3$. Suppose it is true for $n$.

As $G$ is metabelian, we have
\[
c = c(n,j) = 2q(n) + r(n) + j,\ \text{and}\ Z(G) = \langle a_{q(n)+1-j},\ldots,a_{c(n,j)} \rangle.
\]

Since $\widehat{G}$ is of $G(n+1,c)$-type, its essential relations are those $R(n,j)$ acquired from some group $G$ of $G(n, c)$-type,
\begin{align*}
(ii) \ \ & [a_1, a_i] = e\quad (2\leq i\leq c(n,j)), \\
(iii) \ \  & [a_1, a_{c(n,j)+1+k}] = a_{q(n)+1-j}^{x_k}\ldots a_{q(n)+1+r(n)+2j+k}^{y_k},\quad (0 \leq k \leq q(n)-j-1),
\end{align*}
together with
\begin{align*}
(iv) \ \  & [a_1, a_{n+1}] = a_2^{z_2}\cdots a_{q(n)+1-j}^{z_{q(n)+1-j}}  \cdots  a_{n-q(n)+j}^{z_{n-q(n)+j}}\cdots a_{n-1}^{z_{n-1}} 
\end{align*}
for some integers $x_k$, $y_k$, $z_l$.

Relations $(iii)$ and $(iv)$ reduce in $\widehat{G}$, in three stages to:
\begin{enumerate}
    \item $x_k = y_k= 0\quad (0\leq k \leq q(n) -j - 1)$;
    \item $z_2 = \ldots = z_{q(n)+1-j}=0$;
    \item $z_{n-q(n)+j} = \ldots = z_{n-1}=0$.
\end{enumerate}

These reductions are obtained as follows:
\[
e = [a_1, a_{c+1+k}, a_{n+1}] = [ a_{q(n)+1-j}^{x_k}, a_{n+1}] = ([a_1, a_{c+1}]^{x_k})_{(q(n)-j)}
\]
produces $x_k = 0$;

\[
e = [a_{n+1-c-k}, a_{n+1}, a_1] = [ a_{c+1}^{y_k}, a_1] = [a_1, a_{c+1}]^{-y_k}
\]
produces $y_k = 0$;

\[
e = [a_1, a_{n+1}, a_{c+1}]\ \text{produces}\ z_2 = 0,
\]
\[
\vdots
\]
\[
e = [a_1, a_{n+1}, a_{n+1}]\ \text{produces}\ z_{q(n)+1-j} = 0;
\]
\[
e = [a_1, a_{n+1}, a_{q(n)}]\ \text{produces}\ z_{n-1} = 0,
\]
\[
\vdots
\]
\[
e = [a_1, a_{n+1}, a_1]\ \text{produces}\ z_{n-q(n)+j} = 0.
\]
\end{proof}

\subsection{Matrix representations of class 2 nilpotent groups of $G(n,c)$-type}

We produce in this subsection faithful linear representations over $\mathbb{Z}_p$ of nilpotent groups of $G(n,c)$-type. This goes to show that the presentations given for groups in Theorem \ref{nilclass2} are consistent, and therefore all have Hirsch length $n$.

As we have seen, if $G$ is a classe 2 nilpotent group of $G(n,c)$-type, $G$ has a factorization by abelian subgroups $G = V\rtimes K$, where $V$ is a normal subgroup of $G$, and $V$ itself factors as $V = W\oplus
Z$. The abelian subgroups are : W = $\langle a_1,...,a_{q(n) - j}\rangle$; Z = $\langle a_{q(n)+1 - j},...,a_{c(n,j)}\rangle$, the center of $G$; $K = \langle a_{c(n,j)+1},...,a_n\rangle$.

Following the characterization of $G$, we consider $V$ as an $\mathbb{Z}_p [K]$ module written additively and construct matrix  $[a_s]$ representing elements of $K$. Let $u_k$ be the additive form of the right-hand side of the equation  \[ [a_1, a_{c(n,j)+1+k}] = (a_{q(n)+1-j})^{x_k}....(a_{q(n)+1+r(n)+2j+k})^{y_k},\]
where $0 \leq k \leq q(n)-j-1$. The action of $K$ on $V$ is as follows:
\begin{align*}
[a_{c(n,j)+1}] : a_1 \mapsto a_1 + u_0 , a_i \mapsto a_i \quad (2 \leq i \leq c(n,j))
\end{align*}
\[
[a_{c(n,j)+2}] : a_1 \mapsto a_1 + u_1 , a_2 \mapsto a_2 + (u_0)^f, a_i \mapsto a_i \quad (3 \leq i \leq c(n,j))
\]
where we recall that $u_0$  is non-trivial. In general,
\begin{align*}
[a_{c(n,j)+k}] : a_1 &\mapsto a_1 + u_{k-1} , \ldots , a_k \mapsto a_k + (u_0)^{f^{k-1}} , a_i \mapsto a_i \quad (k+1 \leq i \leq c(n,j))
\end{align*}

Thus, the matrix representation in block form of $[a_{c(n,j)+k}]$ is
\begin{align*}
\begin{bmatrix}
I_{(q(n) -j) \times (q(n) -j)} & [B_k] \\
0_{(q(n)+r(n)+2j) \times (q(n)-j)} & I_{(q(n)+r(n)+2j) \times (q(n)+r(n))}
\end{bmatrix}
\end{align*}
where
\begin{align*}
[B_k] =
\begin{bmatrix}
u_k & 0 & 0 & \ldots & 0 \\
0 & u_{k-1} & 0 & \ldots & 0 \\
\vdots & 0 & \ddots & \vdots & \vdots \\
\vdots & \vdots& \ldots & u_0 & 0 \\
0 & 0 & \ldots & 0 & 0
\end{bmatrix}
\end{align*}

The groups $G$ constructed above are of affine sort and can be represented matricially as
\begin{align*}
\begin{bmatrix}
1 & V \\
0_{1 \times (q-j)} & [K]
\end{bmatrix}
\end{align*}
where $V$ is in row form.

Finally, in the affine representation of $G$, the subgroup which corresponds to $H$ is $\langle a_1,..., a_{c(n,j)}\rangle\langle [a_{c(n,j)+1}],...,[a_{n-1}]\rangle$, that to $H^f$ is $$\langle a_2,..., a_{c(n,j)+1}\rangle\langle [a_{c(n,j)+2}],...,[a_n]\rangle,$$ and what corresponds to the monomorphism $f$ is the extension of the obvious isomorphisms
\begin{align*}
f_0 : \langle a_1,..., a_{c(n,j)}\rangle &\to \langle a_2,..., a_{c(n,j)+1}\rangle, \
f_1 : \langle [a_{c(n,j)+1}],...,[a_{n-1}]\rangle &\to \langle [a_{c(n,j)+2}],...,[a_n]\rangle.
\end{align*}

\section{Lifting SSP groups having nilpotency class 2}

We begin this section with a solution of a specific $f$-equation over $\mathbb{Z}_p$.

Let $B = \langle b_1,\ldots,b_n \rangle$ be a multiplicative free abelian $p$-group ($p$ prime or infinite) of rank $n \geq 2$, which is self-similar by
\[
f: b_i \rightarrow b_{i+1} \quad (1 \leq i \leq n-1).
\]

Let
\[
w = b_1^{x_1} \ldots b_s^{x_s}
\]
be a non-trivial element of $B$ such that $1 \leq s \leq n-1$.

Given $1 \leq t \leq n - s$, and $u,v$ non-negative in $\mathbb{Z}_p$ (if $p$ is a prime number, we view $\mathbb{Z}_p$ as $\{0,\ldots,p-1\}$), let
\[
w^{u + v f^t} = \left( b_1^{x_1 u} \ldots b_s^{x_s u} \right) \left( b_{1+t}^{x_1 v} \ldots b_{s+t}^{x_s v} \right).
\]

\begin{prop}
Suppose $w$ is nontrivial, and $w^{u + v f^t} = e$. Then, $u = v = 0$.    
\end{prop}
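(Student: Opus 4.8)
The plan is to argue by a "leading term" (or "highest/lowest index") bookkeeping argument on the exponents appearing in $w^{u+vf^t}=e$, treating $B$ as a free $\mathbb{Z}_p$-module on $b_1,\dots,b_n$ with $f$ acting as the shift $b_i\mapsto b_{i+1}$. Since $w=b_1^{x_1}\cdots b_s^{x_s}$ is non-trivial, let $l$ be the smallest index with $x_l\neq 0$ and $m$ the largest such index, so $1\le l\le m\le s$. Writing the equation additively in the basis $\{b_1,\dots,b_n\}$, the element $w^{u+vf^t}$ is the vector whose $b_i$-component is $x_i u + x_{i-t} v$ (with the convention that $x_j=0$ for $j\notin\{1,\dots,s\}$), and this vector must be zero.

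First I would look at the component of index $l$: since $l\le s\le n-1 < 1+t$ forces $l-t<1$, so $x_{l-t}=0$, the $b_l$-component is exactly $x_l u$. Hence $x_l u = 0$ in $\mathbb{Z}_p$, and as $x_l$ is a nonzero element of $\mathbb{Z}_p$ (a field when $p$ is prime, and $\mathbb{Z}$ when $p$ is infinite, in either case a domain), we conclude $u=0$. Symmetrically, I would examine the component of the largest index that can occur, namely $m+t$: there we have $x_{m+t}=0$ (because $m+t\le s+t\le n$ but $m+t>s$ since $m\le s$ and $t\ge 1$, so the first summand vanishes), leaving only $x_m v$ as the $b_{m+t}$-component. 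Thus $x_m v = 0$, and since $x_m\neq 0$ this gives $v=0$. (If one prefers to reuse $u=0$: once $u=0$ the equation becomes $b_{1+t}^{x_1 v}\cdots b_{s+t}^{x_s v}=e$, and comparing the $b_{l+t}$-component, which is $x_l v$ since the indices $1+t,\dots,s+t$ are distinct basis elements, again yields $v=0$.)

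The only point requiring care — and the one I would state explicitly — is the index arithmetic guaranteeing that the exponents $x_l$ and $x_m$ are not "overlapped" by a shifted copy of $w$: this is exactly where the hypotheses $1\le s\le n-1$ and $1\le t\le n-s$ are used, ensuring $l-t\le s-t<1$ and $s<m+t\le n$. There is no genuine algebraic obstacle here; the argument is purely a matter of reading off one coordinate of a zero vector in a free module and invoking that $\mathbb{Z}_p$ has no zero divisors. I would therefore keep the write-up to a few lines: set up the additive notation, isolate the coordinate of index $l$ (or, equivalently, of the smallest index of $w$) to kill $u$, then isolate the top coordinate to kill $v$.
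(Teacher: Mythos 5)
Your argument is correct in substance and is genuinely simpler than the paper's. The paper splits into two cases according to whether the supports $\{1,\dots,s\}$ and $\{1+t,\dots,s+t\}$ are disjoint or overlap, and in the overlapping case partitions the middle coefficients into $t$-blocks $L_1,\dots,L_{s'},M$ and propagates $x_1=\dots=x_t=0$, then $x_{t+1}=\dots=x_{2t}=0$, etc., by induction until $w=e$ gives a contradiction. You instead read off $u$ and $v$ from two single coordinates: the $b_l$-coordinate (lowest nonzero index of $w$) gives $x_l u=0$, and the $b_{m+t}$-coordinate (highest index occurring in $w^{f^t}$) gives $x_m v=0$; since $\mathbb{Z}_p$ is a domain, $u=v=0$. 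This avoids the case split and the block induction entirely, and is the argument I would keep.

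One repair is needed in your justifications, though the conclusions are right. You claim $x_{l-t}=0$ because ``$l\le s\le n-1<1+t$ forces $l-t<1$,'' but $n-1<1+t$ is false in general (take $t=1$ and $s$ large); likewise $m+t>s$ does not follow from $m\le s$ and $t\ge 1$ unless $m=s$. Neither inequality is what you actually need: since $t\ge 1$ you have $l-t<l$ and $m+t>m$, so $x_{l-t}=0$ by the \emph{minimality} of $l$ among indices with nonzero exponent (or because $l-t<1$), and $x_{m+t}=0$ by the \emph{maximality} of $m$ (or because $m+t>s$). The hypothesis $t\le n-s$ is used only to guarantee that $b_{s+t}$ is still one of the free generators $b_1,\dots,b_n$, so that vanishing of the element really forces vanishing of every coordinate. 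With that correction the proof is complete.
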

\begin{proof}
    $w = b_1^{x_1} \ldots b_s^{x_s}$,
\[
w^{f^t} = b_{1+t}^{x_1} \ldots b_{s+t}^{x_s}.
\]

(1) The assertion is true as the set of indices $\{1,\ldots,s\}$, $\{1+t,\ldots,s+t\}$ of the pair $w$, $w^{f^t}$ are disjoint (that is, $s < 1 + t$).

(2) Suppose $1 + t \leq s$. Then, from
\[
w^{u + v f^t} = \left( b_1^{x_1} \ldots b_t^{x_t} \right)^u \left( b_{1+t}^{x_{1+t} u + x_1 v} \ldots b_s^{x_s u + x_{s - t} v} \right) \left( b_{1+s}^{x_{1+s - t}} \ldots b_{s+t}^{x_s} \right)^v
\]
we conclude
\begin{itemize}
    \item[(i)] $\{ x_1,\ldots,x_t \} u = \{ 0 \}$,
    \item[(ii)] $L = \{ x_{t+1} u + x_1 v, \ldots, x_s u + x_{s - t} v \} = \{ 0 \}$,
    \item[(iii)] $\{ x_{1+s - t}, \ldots, x_s \} v = \{ 0 \}$.
\end{itemize}

Write $s$ modulo $t$ as $s = s' t + r'$ with $1 \leq s'$, $0 \leq r' \leq t-1$, and accordingly, partition $L$ into $s'$ $t$-blocks followed by an $r'$-block. Thus,
\[
L = \bigcup_{i=1}^{s'} L_i \cup M,
\]
where
\begin{align*}
L_1 &= \{ x_{t+1} u + x_1 v, \ldots, x_{2t} u + x_t v \}, \\
L_2 &= \{ x_{2t+1} u + x_{t+1} v, \ldots, x_{3t} u + x_{2t} v \}, \\
&\vdots \\
L_i &= \{ x_{i t + 1} u + x_{(i - 1) t + 1} v, \ldots, x_{(i + 1) t} u + x_{i t} v \}, \\
M &= \{ x_{s' t + 1} u + x_{(s' - 1) t + 1} v, \ldots, x_s u + x_{s - t} v \}.
\end{align*}

Suppose $u$ is not zero.

Then, from (i)
\[
x_1 = \ldots = x_t = 0.
\]
Therefore,
\[
L_1 = \{ x_{t+1} u + x_1 v, \ldots, x_{2t} u + x_t v \} = \{ x_{t+1} u, \ldots, x_{2t} u \} = \{ 0 \};
\]
thus,
\[
x_{t+1} = \ldots = x_{2t} = 0.
\]

By induction,
\[
L_{s'-1} = \{ x_{(s'-1) t + 1} u + x_{(s'-2) t + 1} v, \ldots, x_{s' t} u + x_{(s'-1) t} v \} = \{ x_{(s'-1) t + 1} u, \ldots, x_{s' t} u \} = \{ 0 \}.
\]

Further,
\[
M = \{ x_{s' t + 1} u + x_{(s' - 1) t + 1} v, \ldots, x_s u + x_{s - t} v \} = \{ x_{s' t + 1} u, \ldots, x_s u \} = \{ 0 \}.
\]

Hence,
\[
w = e;
\]
a contradiction. If $u = 0$, then we directly obtain $v = 0$.
\end{proof}

\begin{theorem}\label{SSPHnil}
Let $G$ be a group of type $G(n, c(n,j))$ of nilpotency class 2, having essential relations $R(n,j)$, as in Theorem \ref{nilclass2}.  Suppose $\widehat{G}$ is a lifting of $G$. Then
$\widehat{G} = \langle a_1,...,a_{n+1} \ | \   \widehat{R}^{\langle F\rangle} \rangle$ ,
where \[F: a_1\to a_2\to \ldots \to a_n\to a_{n+1},\]
and $\widehat{R}$ is the set of essential relations :
\begin{itemize}
    \item[(i)] $(a_1)^p =e;$
    \item[(ii)] $[a_1, a_i] =e \ \ (2\leq i\leq  c(n,j));$
    \item[(iii)] $[a_1, a_{c(n,j)+1+k}] \in 
\langle a_{q(n)+2-j},\ldots,a_{q(n)+r(n) +2j +k}\rangle \ \ 
0\leq k \leq q(n) -j -1\rangle$, and is nontrivial if $k=0$;
\item[(iv)] $[a_1, a_{n+1}] \in 
\langle a_{q(n)+1-j},\ldots,a_{c(n,j)+1}\rangle.$
\end{itemize}    
\end{theorem}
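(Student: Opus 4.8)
The plan is to take the canonical basis of $\widehat{G}$ furnished by the $SSP$-axioms, read off the coarse shape of its essential relations, and then sharpen the right-hand sides by pushing a short list of Hall-Witt identities through the $f$-equation Proposition proved just above.

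\emph{Setup.} Since $\widehat{G}$ is an $SSP$-lifting of $G$, it carries a canonical basis $\{a_1,\dots,a_{n+1}\}$ with $F\colon a_i\mapsto a_{i+1}$ and with $\widehat{H}:=\langle a_1,\dots,a_n\rangle=G$. As $G$ has class $2$ it is non-abelian, hence so is $\widehat{G}$; and since the relations $[a_1,a_i]=e$ $(2\le i\le c(n,j))$ and $[a_1,a_{c(n,j)+1}]\ne e$ already hold in $G$, the cut-off of $\widehat{G}$ is exactly $c:=c(n,j)$. Thus $\widehat{G}$ is of type $G(n+1,c)$, it is metabelian (groups of $SSP$-type are metabelian), and $Z(\widehat{H})=\langle a_{q(n)+1-j},\dots,a_c\rangle$ with $\widehat{H}'\le Z(\widehat{H})$ by the Proposition on the centre. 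Write $q:=q(n)$, $r:=r(n)$. By the theorem on polycyclic collection formulas the essential relations of $\widehat{G}$ are $a_1^p=e$, $[a_1,a_i]=e$ $(2\le i\le c)$, and $[a_1,a_{c+1+k}]\in\langle a_2,\dots,a_{c+k}\rangle$ for $0\le k\le q-j$ (the value $k=q-j$ giving index $n+1$), non-trivial at $k=0$; applying Theorem \ref{nilclass2} to $\widehat{H}=G$ one may moreover write, for $0\le k\le q-j-1$, $[a_1,a_{c+1+k}]=a_{q+1-j}^{\,x_k}\cdots a_{q+1+r+2j+k}^{\,y_k}$, and $[a_1,a_{n+1}]=a_2^{z_2}\cdots a_n^{z_n}$. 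It therefore suffices to show $x_k=y_k=0$ for all $k$ and $z_\ell=0$ for $\ell\in\{2,\dots,q-j\}\cup\{c+2,\dots,n\}$: this turns the wide spans into exactly (iii) and (iv).

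\emph{The Hall-Witt reduction.} I would argue by induction on $n$ (the cases $n\le 6$ and $c=n-1$ treated in Sections 2--3 serving as base), working in the metabelian group $\widehat{G}$ with the three families of simplified relations $W(1,\ell,n+1)$ catalogued in the Observations of Section 4: (A) $[a_{q-j+1},a_{n+1},a_1]=e$; (B) $[a_{n+1},a_1,a_\ell][a_\ell,a_{n+1},a_1]=e$ for $1\le\ell\le q-j$; (C) $[a_{n+1},a_1,a_\ell][a_1,a_\ell,a_{n+1}]=e$ for $c+1\le\ell\le n$. Using the shift identity $[a_s,a_t]=[a_1,a_{t-s+1}]_{s-1}$ and the vanishing of $[a_1,a_i]$ for $i\le c$, each term is rewritten inside the free abelian group $\langle a_{q+1-j},\dots,a_n\rangle$ (of rank $c$, by the Proposition on the centre). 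Relation (C) with $\ell=c+1$ collapses to $([a_1,a_{c+1}]_{q-j})^{x_0}=e$, whence $x_0=0$, and its symmetric mate (A) gives $y_0=0$. Then, ascending in $k$: after deleting the terms carrying the $z_2,\dots,z_k$ already shown to vanish, relation (C) with $\ell=c+1+k$ reduces to an equation of the form $[a_1,a_{c+1}]^{\,\pm z_{k+1}+x_k f^{\,q-j-k}}=e$, and since $[a_1,a_{c+1}]$ is non-trivial and $1\le q-j-k$, the $f$-equation Proposition forces $z_{k+1}=0$ and $x_k=0$; symmetrically, relation (B) with $\ell=q-j+1-k$ reduces to $[a_1,a_{c+1}]^{\,\pm z_{n+1-k}+y_k f^{\,q-j-k}}=e$, forcing $z_{n+1-k}=0$ and $y_k=0$. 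Letting $k$ run from $0$ to $q-j-1$ exhausts all the $x_k,y_k$ and all the required $z_\ell$; what remains is precisely $\widehat{R}$, and closing under $\langle F\rangle$ recovers every collection formula of $\widehat{G}$.

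\emph{Where the difficulty sits.} The only real idea is the translation into $f$-equations; the work is the bookkeeping. Because we are \emph{not} assuming $\widehat{G}$ has class $2$, $[a_{n+1},a_1]=[a_1,a_{n+1}]^{-1}$ need not be central, so the ``middle'' Hall-Witt terms $[[a_{n+1},a_1],a_\ell]$ genuinely survive, and when expanded in $\widehat{G}$ they drag in the lower commutators $[a_1,a_{c+1}],\dots,[a_1,a_{c+k}]$ together with several of the $z_\ell$. Hence the coefficients can be eliminated only in the correct order (outermost first, then inward), and at every step one must check that the word-equation that emerges has exactly the form $w^{u+vf^t}=e$ with $1\le t\le n-\deg w$ required by the Proposition --- this is the delicate part. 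A side point needing separate comment is the degenerate case $j=0$ with $n\equiv 0,1\pmod 3$ (and the base cases $n\le 6$, $c=n-1$): there the target span of (iii) at $k=0$ is empty, so the argument instead yields $[a_1,a_{c+1}]=e$, contradicting the cut-off; thus no lifting exists and the statement holds vacuously --- the same obstruction already met for the groups $P_k$.
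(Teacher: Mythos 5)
Your proposal follows essentially the same route as the paper's own proof: write the essential relations of $\widehat{G}$ with undetermined exponents, dispose of $x_0=y_0=0$ via Theorem \ref{cutpoint}, and then eliminate the remaining $x_k,y_k,z_\ell$ in order of increasing $k$ by reducing the symmetric pairs of Hall--Witt relations $W(1,\ell,n+1)$ to equations of the form $w^{u+vf^t}=e$ and invoking the $f$-equation Proposition. The bookkeeping (which coefficients die at step $k$, and the remark on the vacuous case $j'<0$) matches the paper's argument, so no further comparison is needed.
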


We note that $(iii)$ implies that for $1\leq t<s \leq n$ we have \[[a_t,a_s]=[a_1,a_{s-t+1}]_{t-1} \in 
Z(G) = \langle a_{q(n)-j+2}, \dots, a_{c(n,j)} \rangle.
\] 
Furthermore, we note
\[
[a_1, a_{n+1}, a_1] \in \langle [a_{c(n,j)+1}, a_1] \rangle = \langle a_{q(n)+2-j}, \dots, a_{q(n)+r(n)+2j} \rangle,
\] and 
\[
[a_1, a_{n+1}, a_{n+1}] \in \langle [a_{q(n)+1-j}, a_{n+1}] \rangle = \]\[=\langle [a_1, a_{c(n,j)+1}]_{q(n)-j} \rangle
= \langle a_{q(n)+2-j}, \dots, a_{q(n)+r(n)+2j} \rangle_{q(n)-j} =\] \[=\langle a_{2q(n)+2-2j}, \dots, a_{2q(n)+r(n)+j} \rangle
= \langle a_{2q(n)+2-2j}, \dots, a_{c(n,j)} \rangle,
\]
and the right-hand side of both are subgroups of $Z(G)$. Thus, once the theorem is established it would follow that
\[
\gamma_3(G) = \langle [a_1, a_{n+1}, a_1], [a_1, a_{n+1}, a_{n+1}] \rangle
\]
and $G$ is nilpotent of class at most 3.

\subsection{Proof of Theorem 6.2}
\begin{proof}
We use the presentation of $G$ given in Theorem \ref{nilclass2},
\[
G = \langle a_1, a_2, \dots, a_n \mid  R(n,j)^{\langle f \rangle}  \rangle
\]
where $R(n,j)$ are the essential relations:
\begin{enumerate}
    \item[(i)]  $a_1^p=e$
    \item[(ii)] $[a_1, a_i] = e \quad (2 \leq i \leq c(n,j))$,
    \item[(iii)] $[a_1, a_{c(n,j)+1+k}] = a_{q(n)+1-j}^{x_k} a_{q(n)+2-j}^{x'k} \dots a_{q(n)+r(n)+2j+k}^{y'k} a_{q(n)+r(n)+2j+k}^{y_k}, \\ \quad (1 \leq k \leq q(n)-j-1)$, and is nontrivial if $k=0$.
\end{enumerate}
Then, as before, we have a presentation of $\widehat{G}$ by adjoining $a_{n+1}$ to $\{a_1, a_2, \dots, a_n\}$ and the relation
\[
(iv): [a_1, a_{n+1}] = (a_2)^{z_2} (a_3)^{z_3} \dots (a_{n-2})^{z_{n-2}} (a_{n-1})^{z_{n-1}}
\]
to the essential relations $R(n,j)$, for some integers $x_k$, $y_k$, $z_l$, and take the closure of these relations under powers of $f$.

Recall,
\[
Z(G) = \langle a_{q(n)-j+1}, \dots, a_{c(n,j)} \rangle
\]
\[
Z(\widehat{G}) = \langle a_{q(n)-j+2}, \dots, a_{c(n,j)} \rangle.
\]
The theorem will follow from two sorts of reductions:
\begin{enumerate}
    \item $x_0 = y_0 = 0$;
    \item
        \begin{itemize}
            \item $x_k = z_{k+1} = 0$,
            \item $y_k = z_{n-k} = 0 \quad (1 \leq k \leq q(n)-j-1)$.
        \end{itemize}
\end{enumerate}

The first was already done in Theorem \ref{cutpoint}, for a general nilpotency class. We will prove (2) by induction on $k$. Let $k=1$. Then,
\[
[a_1, a_{c(n,j)+2}] = (a_{q(n)+1-j})^{x_1} \dots (a_{q(n)+r(n)+2j})^{y_1},
\]
\[
[a_1, a_{c(n,j)+2}, a_{n+1}] = [(a_{q(n)+1-j})^{x_1}, a_{n+1}]
= ([(a_1, a_{c(n,j)+1}]^{x_1})_{q(n)-j},
\]
\[
[a_{n+1}, a_1, a_{c(n,j)+2}] = [((a_2)^{z_2} \dots (a_{q(n)})^{z_{q(n)}} (a_{c(n,j)+1})^{z_{c(n,j)+1}} \dots (a_n)^{z_n})^{-1}, a_{c(n,j)+2}]
=\] \[ [((a_2)^{-z_2}, a_{c(n,j)+2}] = ([(a_1), a_{c(n,j)+1}]^{-z_2})_1.
\]
Therefore, relation $W'(1, c(n,j)+2, n+1)$ reduces to
\[
([(a_1, a_{c(n,j)+1}]^{x_1})_{q(n)-j} \cdot ([a_1, a_{c(n,j)+1}]^{-z_2})_1 = e.
\]
Write $w=[a_1, a_{c(n,j)+1}]$ and $t=q(n)-j-1$. Then,
\[
w^{(x_1 \cdot f^t) - z_2} = e.
\]
It follows from the previous proposition that
\[
x_1 = 0, z_2 = 0.
\]
In a similar fashion, the symmetric $W'(1, q-j-2, n+1)$, yields
\[
y_1 = 0, z_{n-1} = 0.
\]
Thus, after steps $k=0,1$, the system $(iii)$ is reduced to

\begin{itemize}
    \item $[a_1, a_{c(n,j)+1}] = (a_{q(n)+2-j})^{x'_0} \dots (a_{q(n)+r(n)+2j+1})^{y'_0}$ nontrivial,
    \item $[a_1, a_{c(n,j)+1+1}] = (a_{q(n)+2-j})^{x'_k} \dots (a_{q(n)+r(n)+2j+k-1})^{y'_k}$,
    \item $[a_1, a_{c(n,j)+1+k}] = (a_{q(n)+1-j})^{x_k} (a_{q(n)+2-j})^{x'k} \dots (a_{q(n)+r(n)+2j+k})^{y'k} (a_{q(n)+r(n)+2j+k})^{y_k} \\ (2 \leq k \leq q(n)-j)$.
\end{itemize}
and $(iv)$ reduces to $[a_1, a_{n+1}] = (a_3)^{z_3} \dots (a_{n-2})^{z_{n-2}}.$ The argument proceeds similarly for a general $k$.
\end{proof}
\begin{corollary}
Let $G$ be an SSP-group where $o(a_i) = 2$. Then $G$ is a finite 2-group of nilpotency class at most 2.
\end{corollary}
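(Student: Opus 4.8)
The plan is to combine the preceding structural theorems with the special feature of exponent $2$: every canonical generator $a_i$ has order $2$, so whenever a set $\{a_s,\dots,a_t\}$ is commutative, the subgroup it generates is elementary abelian. First I would set up notation: by the earlier results $G$ is a metabelian $SSP$-group of some $G(n,c)$-type with $c = c(n,j)$, $n = 3q(n)+r(n)$, and canonical basis $S = \{a_1,\dots,a_n\}$ on which $f$ acts by $a_i \mapsto a_{i+1}$; moreover the commutator set $\{a_{q(n)+1-j},\dots,a_{n-1}\}$ is commutative. The goal splits into two parts: (a) $G$ is finite, i.e. $p = 2$ forces $o(a_i) = 2$ which is given, so all we need is that the parameter $p$ in the definition of $SSP$ is $2$ here — that is immediate from $o(a_i)=2$ and Theorem~2.x(i) giving all generators the common order $p$; and (b) the nilpotency class of $G$ is at most $2$.

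For part (b), the key is to run the induction on $n$ underlying Theorem~\ref{SSPHnil} (lifting of a class-$2$ group) together with the $G(6,4)$ example, but to show the class-$3$ obstruction $\gamma_3$ is forced to vanish in exponent $2$. Concretely, I would argue by induction on $n$: for small $n$ ($n \le 5$) the groups already have class at most $2$ by Section~3; for $n = 6$ the only candidate of class $3$ is $G(6,4)$-type, and item~(3.4)(b) of Section~3 already shows that when $o(a_i) = 2$ the subgroup $\langle a_1, a_6\rangle$ is abelian or dihedral of order $8$, hence $\gamma_3(G)$ is trivial; thus the base cases hold. For the inductive step, suppose $G$ is a $G(n+1,c)$-group with $o(a_i) = 2$. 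Its subgroup $H$ is a $G(n,c)$-group with $o(a_i)=2$, so by induction $H$ has class at most $2$. Then Theorem~6.3 (the class-$2$ lifting theorem) applies: it tells us $G$ has class at most $3$ and pins down $\gamma_3(G) = \langle [a_1, a_{n+1}, a_1],\ [a_1, a_{n+1}, a_{n+1}]\rangle$ with both generators lying in $Z(G)$.

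It remains to kill these two elements using exponent $2$. Here I would use the explicit form $[a_1, a_{n+1}] = a_2^{z_2}\cdots a_{n-1}^{z_{n-1}}$ with all $z_i \in \{0,1\}$, and the fact — available from the Hall–Witt reductions in Section~4's Observations and from Theorem~\ref{SSPHnil}(iv) — that after reduction $[a_1, a_{n+1}]$ already lies in the commutative subgroup $\langle a_{q(n)+1-j},\dots,a_{c(n,j)+1}\rangle$ together with possibly $a_1$ itself. Since all the $a_i$ appearing commute with one another and have order $2$, the element $w = [a_1,a_{n+1}]$ satisfies $w^2 = e$ up to the contribution of $a_1$; computing $[a_1, a_{n+1}, a_1] = [a_1, w]$ and $[a_1, a_{n+1}, a_{n+1}] = [w, a_{n+1}]$ and using that in a class-$\le 3$ metabelian $2$-group of this shape the relevant commutators square to the identity while also being conjugate to their inverses, one forces $[a_1, a_{n+1}, a_1] = [a_1, a_{n+1}, a_{n+1}] = e$; hence $\gamma_3(G) = e$ and $G$ has class at most $2$. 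Finiteness then follows since a finitely generated nilpotent group of exponent $2$ is a finite $2$-group, or directly from Theorem~\ref{p-group}.

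The main obstacle I anticipate is the last step: making precise why, in exponent $2$, the two potential class-$3$ commutators are forced to vanish rather than merely being involutions. The subtlety is that $a_1$ itself may appear in $[a_1, a_{n+1}]$ (it does in the $G(6,4)$ presentation, via $a_1^{a_6} = a_1 a_2^{x_2}\cdots a_5^{x_5}$), so $w$ need not be central and $w^2$ need not vanish outright; one has to exploit both the collection identity $[a_1, a_{n+1}, a_1]$ together with $\langle a_1, a_{n+1}\rangle$ being a two-generator $2$-group — hence of class $\le 2$ when it is a subgroup of the whole, being either elementary abelian or dihedral of order $8$ — to conclude $[a_1, w] \in \gamma_3(\langle a_1, a_{n+1}\rangle) = e$, and symmetrically for $[w, a_{n+1}]$. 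Carefully checking that this local argument globalizes — i.e. that $\gamma_3(G)$ is generated by these two elements and nothing else escapes — is where the bookkeeping from Theorem~\ref{SSPHnil} must be invoked in full.
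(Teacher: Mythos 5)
Your proposal is correct and follows essentially the same route as the paper: induction on $n$ gives $H$ class at most $2$, Theorem~\ref{SSPHnil} then yields $\gamma_3(G)=\langle [a_1,a_{n+1},a_1],[a_1,a_{n+1},a_{n+1}]\rangle$, and exponent $2$ kills both generators via the dihedral-of-order-$8$ observation. The worry in your last paragraph is unnecessary: by Theorem~\ref{SSPHnil}(iv) the reduced $[a_1,a_{n+1}]$ lies in $\langle a_{q(n)+1-j},\dots,a_{c(n,j)+1}\rangle$, an elementary abelian $2$-group not containing $a_1$ (since $j\leq q(n)-1$), so $[a_1,a_{n+1}]^2=e$ holds outright and the local argument on $\langle a_1,a_{n+1}\rangle$ goes through exactly as the paper states.
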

\begin{proof}
We proceed by induction on $n$. The assertion is true for $n = 2$. Suppose the assertion holds for $n$ and let $G$ be of type $G(n+1, c(n,j))$. Then $H$ has nilpotency class at most 2.

Following the proof of the previous theorem, we note that the right-hand side of $(iii)$ is an element of the elementary abelian 2-group $\langle a_{q(n)-j+1}, \dots, a_{c(n,j)+1} \rangle$.

As $a_1^2 = a_{n+1}^2 = [a_1, a_{n+1}]^2 = e$, it follows that $[a_1, a_{n+1}, a_1] = [a_1, a_{n+1}, a_{n+1}] = e$. By Theoreom \ref{SSPHnil} 
\[
\gamma_3(G) = \langle [a_1, a_{n+1}, a_1], [a_1, a_{n+1}, a_{n+1}] \rangle.
\]Then $\gamma_3(G)=1$ and the proof is complete.
\end{proof}

\end{document}


\title{Appendix: Computational experiments}
\author{Bettina Eick}
\date{}
\maketitle

Throughout, let $p$ be a prime and write $\Z_p = \{0, \ldots, p-1\}$.

Let $P$ be a presentation on $n$ generators $g_1, \ldots, g_n$ 
with relations of the form
\begin{eqnarray*}
g_i^p &=& 1 \;\; \mbox{ for } 1 \leq i \leq n, \\
{[g_i, g_j]} &=& g_{i+1}^{t_{i,j,i+1}} \cdots g_{j-1}^{t_{i,j,j-1}} \;\;
                 \mbox{ for } 1 \leq i < j \leq n,
\end{eqnarray*}
for certain $t_{i,j,k} \in \Z_p$. Then $P$ defines a 
group of order dividing $p^n$. We say that $P$ is a {\em SSP presentation}
if $t_{i,j,k} = t_{i-1,j-1,k-1}$ holds for all $i,j,k$ with $i < 1$. 
We say that $P$ is {\em consistent} if it defines a group of order 
precisely $p^n$. 

\begin{lemma}
If $G$ is a SSP-group of order $p^n$, then there exists a SSP presentation
$P$ on $n$ generators defining $G$.
\end{lemma}

\begin{proof}
By Theorem 1.4 the group $G$ has a polycyclic generating set $S = \{g_1, 
\ldots, g_n\}$ such that its elements have order $p$ and the generating 
set is self-similar under $f : g_i \ra g_{i+1}$ for $1 \leq i \leq n-1$. 
As $S$ is a polycyclic generating set, it corresponds to a polycyclic
presentation with relations $g_i^p = 1$ for $1 \leq i \leq n$ and 
commutator relations expressing $[g_i, g_j]$ as a word in the generators
for $1 \leq i < j \leq n$, see Chapter 8 in \cite{HEO05}. Using the
self-similarity, a commutator relation $[g_i, g_j]$ corresponds to the
relation for $[g_{i-1}, g_{j-1}]$ and thus $t_{i,j,k} = t_{i-1,j-1,k-1}$
for $i > 1$ follows. Using Theorem 2.3 it follows that the exponents
$t_{i,j,k}$ are zero if $k \leq i$ or $k \geq j$. This yields the desired
result.
\end{proof}

We describe an algorithm to determine all consistent SSP presentations on
$n$ generators. This uses induction on $n$. The initial
step of the induction is given by the SSP presentations on $2$ generators;
note that there is exactly one such presentation and this corresponds to the
elementary abelian group of order $p^2$. The following elementary lemma 
underpins the induction step.

\begin{lemma}
If $P$ is a consistent SSP presentation on $n$ generators $g_1, \ldots, 
g_n$, then the induced presentation $\ol{P}$ on the generators 
$g_1, \ldots, g_{n-1}$ is a consistent SSP presentation. 
\end{lemma}

Thus the induction step considers all consistent SSP presentations on
$n-1$ generators, and extends each one of them in turn in all possible 
ways to an SSP presentation on $n$ generators. Given an SSP presentation 
$\ol{P}$ on $n-1$ generators $g_1, \ldots, g_{n-1}$ and an additional 
generator $g_n$, the possible extensions of $\ol{P}$ to a SSP presentation 
$P$ on the $n$ generators $g_1, \ldots, g_n$ only depend on the single 
essential relation
\[ [g_1, g_n] = g_2^{a_2} \cdots g_{n-1}^{a_{n-1}},\]
with $a = (a_2, \ldots, a_{n-1}) \in \Z_p^{n-2}$. We investigate which
vectors $a$ lead to consistent SSP presentations $P$ on $n$ generators.

\begin{theorem}
\label{extend}
Let $\ol{P}$ be a consistent SSP presentation on $n-1$ generators $g_1,
\ldots, g_{n-1}$ defining the group $G$. Let $P$ be
the SSP presentation arising from $\ol{P}$ by adding the generator $g_n$
and the relation $[g_1, g_n] = g_2^{a_2} \cdots g_{n-1}^{a_{n-1}}$ for
some $a \in \Z_p^{n-2}$. For $1 \leq i \leq n-1$ define 
\[ w_i = g_{i+1}^{t_{i,n,i+1}} \cdots g_{n-1}^{t_{i,n,n-1}},\]
with $t_{1,n,k} = a_k$ and $t_{i,n,k} = t_{i-1,n-1,k-1}$ in $\ol{P}$ 
for $i > 1$. Then $P$ is consistent if and only if
\[ \alpha : G \ra G : g_i \ms g_i \cdot w_i\]
induces an automorphism of $G$. In this case the presentation $P$ 
defines the group $G \rtimes_\alpha C_p$.
\end{theorem}

\begin{proof}
If $P$ is consistent, then it defines a group $H$ of order $p^n$ on the
generators $g_1, \ldots, g_n$. In this case $\alpha$ is the automorphism
of $G \leq H$ arising from conjugation with $g_n$. 
Conversely, if $\alpha$ is an automorphism of $G$, then $H = 
G \rtimes_\alpha C_p$ is well-defined. The group $H$ has a power-commutator
presentation on its $n$ generators $g_1, \ldots, g_n$ and this coincides
with the presentation $P$. Hence $P$ is consistent.
\end{proof}

We consider Theorem \ref{extend} in more detail. Given $a \in \Z_p^{n-2}$
and a consistent SSP presentation $\ol{P}$ on $n-1$ generators, it is 
easy to determine $\alpha$. Checking whether $\alpha$ extends 
to an automorphism corresponds to evaluating the defining relations of 
$G$ in the images $g_1 w_1, \ldots, g_{n-1} w_{n-1}$ which is 
straightforward.

In summary, we obtain an effective method to determine all consistent 
SSP presentations on $n$ generators. We summarize its application for 
$p \in \{3,5,7\}$ as follows. Note that all groups determined in this
application have class at most $3$ and the following tables list the 
numbers of presentations by the order and the class of the group they 
define.

\begin{figure}[htb]
\begin{center}
\begin{tabular}{|l|r|r|r|}
\hline
$n$ & class 1 & class 2 & class 3 \\
\hline
\hline
2   & 1 & 0 & 0 \\
3   & 1 & 2 &  0 \\
4   & 1 & 8 & 0 \\
5   & 1 & 26 &  0 \\
6   & 1 & 98 & 144 \\
7   & 1 & 458 &  1728 \\
8   & 1 & 2834 & 16848 \\
9   & 1 & 22112 & 159408 \\
10  & 1 & 200474 & 1551312 \\
\hline
\end{tabular}
\end{center}
\caption{Numbers of consistent SSP presentations
         defining groups of order $3^n$ and class 1, 2 or 3}
\end{figure}

\begin{figure}[htb]
\begin{center}
\begin{tabular}{|l|r|r|r|}
\hline
$n$ & class 1 & class 2 & class 3 \\
\hline
\hline
2   & 1 & 0 & 0 \\
3   & 1 & 4 &  0 \\
4   & 1 & 24 & 0 \\
5   & 1 & 124 &  0 \\
6   & 1 & 724 & 2400 \\
7   & 1 & 6124 & 72000 \\
8   & 1 & 93124 & 1860000 \\
\hline
\end{tabular}
\end{center}
\caption{Numbers of consistent SSP presentations 
         defining groups of order $5^n$ and class 1, 2 or 3}
\end{figure}

\begin{figure}[htb]
\begin{center}
\begin{tabular}{|l|r|r|r|}
\hline
$n$ & class 1 & class 2 & class 3 \\
\hline
\hline
2   & 1 & 0 & 0 \\
3   & 1 & 6 &  0 \\
4   & 1 & 48 & 0 \\
5   & 1 & 342 &  0 \\
6   & 1 & 2694 & 14112 \\
7   & 1 & 33270 & 790272 \\
\hline
\end{tabular}
\end{center}
\caption{Numbers of consistent SSP presentations
         defining groups of order $7^n$ and class 1, 2 or 3}
\end{figure}

\begin{remark}
Let $P$ be a SSP presentation on $n$ generators with respect to the
prime $p$. By eliminating the relations $g_i^p = 1$ one obtains a 
new SSP presentation $\ol{P}$. The nilpotent quotient algorithm 
\cite{NQ} allows to determine the Hirsch length of the infinite
nilpotent group defined by $\ol{P}$. By construction this Hirsch
length is at most $n$, and could potentially be smaller.

Applying this elimination idea to the consistent SSP presentations on 
at most $8$ generators for the prime $3$ yields that all of these 
presentations $P$ induce new presentations $\ol{P}$ defining torsion 
free nilpotent SSP groups of Hirsch length exactly $n$.
\end{remark}

\bibliographystyle{abbrv}

\bigskip
\bigskip

Bettina Eick \\
Institut für Analysis und Algebra \\
Technische Universität Braunschweig \\
Universitätsplatz 2 \\
38106 Braunschweig \\
Germany \\
email: beick@tu-bs.de